\newtheorem{thm}{Theorem}[section]
\newtheorem{theorem}[thm]{Theorem}
\newtheorem{corollary}[thm]{Corollary}
\newtheorem{lemma}[thm]{Lemma}
\newtheorem{proposition}[thm]{Proposition}
\theoremstyle{definition}
\newtheorem{definition}[thm]{Definition}
\theoremstyle{remark}
\newtheorem{remark}[thm]{Remark}
\newenvironment{theorem*}[1]{\smallskip\noindent{\bf #1.}\it}{\medskip}
\numberwithin{equation}{section} \setcounter{section}{0}
\newcommand\tr{\operatorname{tr}}
\newcommand\dom{\operatorname{dom}}
\newcommand\Iso{\operatorname{Iso}}
\newcommand\sd{\operatorname{\mathbf{sd}}}
\newcommand\SD{\operatorname{SD}}
\newcommand\myRe{\operatorname{Re}}
\newcommand\bC{{\mathbb C}}
\newcommand\bN{{\mathbb N}}
\newcommand\bR{{\mathbb R}}
\newcommand\bZ{{\mathbb Z}}
\newcommand\cD{{\mathcal D}}
\newcommand\cP{{\mathcal P}}
\newcommand\cQ{{\mathcal Q}}
\newcommand\sD{{\mathscr D}}
\newcommand\sI{{\mathscr I}}
\newcommand\sK{{\mathscr K}}
\newcommand\sR{{\mathscr R}}
\newcommand\sX{{\mathscr X}}
\newcommand\al{\alpha}
\newcommand\la{\lambda}
\newcommand\bg{\mathbf{g}}
\newcommand\bu{\mathbf{u}}
\newcommand\bv{\mathbf{v}}
\begin{document}

\title{Inverse spectral problems for energy-dependent Sturm--Liouville equations}

\author[R.~Hryniv and N.~Pronska]{Rostyslav Hryniv and Nataliya Pronska}%

\address[R.H.]{Institute for Applied Problems of Mechanics and Mathematics,
3b~Naukova st., 79601 Lviv, Ukraine \and Institute of Mathematics\\ the University of Rzesz\'{o}w\\ 16\,A Rejtana al.\\ 35-959 Rzesz\'{o}w, Poland}
\email{rhryniv@iapmm.lviv.ua}

\address[N.P.]{Institute for Applied Problems of Mechanics and Mathematics,
3b~Naukova st., 79601 Lviv, Ukraine} \email{nataliya.pronska@gmail.com}

\subjclass[2010]{Primary 34A55, Secondary 34B07, 34B24, 34B30, 34L40, 47E05}

\keywords{Inverse spectral problem, energy-dependent potentials, Sturm--Liouville operators}%

\date{\today}

\begin{abstract}
We study the inverse spectral problem of reconstructing
energy-dependent Sturm--Liouville equations from their Dirichlet
spectra and sequences of the norming constants. For the class of problems under consideration, we give a complete description of the corresponding spectral data, suggest a reconstruction algorithm, and establish uniqueness of reconstruction. The approach is based on connection between spectral problems for energy-dependent Sturm--Liouville equations and for Dirac operators of special form.
\end{abstract}

\maketitle


\section{Introduction}

The main aim of the paper is to study the inverse spectral problem
of reconstructing Sturm--Liouville differential equations on~$(0,1)$ with
energy-dependent potentials from their Dirichlet spectra and
suitably defined norming constants. The spectral problem of
interest is given by the differential equation
\begin{equation}\label{eq:intr.spr}
    -y''+qy+2\lambda p y=\lambda^2y
\end{equation}
and the Dirichlet boundary conditions
\begin{equation}\label{eq:intr.bc}
    y(0)=y(1)=0.
\end{equation}
Here~$p$ is a real-valued function in~$L_2(0,1)$ and~$q$ is a
real-valued distribution in $W_2^{-1}(0,1)$; see detailed
definitions in the next section.

Sturm--Liouville spectral problems with potentials depending on
the spectral parameter arise in various models of quantum and
classical mechanics. For instance, to this form can be reduced the corresponding evolution
equations (such as the Klein--Gordon equation~\cite{Naj:83,Jon:93}) that are used to model interactions between colliding
relativistic spinless particles. Then
$\lambda^2$ is related to the energy of the system, thus
explaining the term ``energy-dependent'' in the title of the
paper. Another typical example is related to vibrations of
mechanical systems in viscous media, see~\cite{Yam:90}.

Problems of the form~\eqref{eq:intr.spr} have also appeared
in the physical literature in the context of scattering of waves
and particles. In particular, M.~Jaulent and C.~Jean in
\cite{Jau72, JauJea72, JauJea761,JauJea762}
studied the inverse scattering problems for energy-dependent
Schr\"odinger operators on the line; see also the papers
\cite{MeePiv01,SatSzm95, AktMee91, Kam081,
MakGus89,MakGus86, Nab06,NabGus06, Tsu81}. An
interesting approach to the spectral analysis of the Klein--Gordon equations using the Krein
spaces (i.e., spaces with indefinite scalar products) was
suggested by P.~Jonas~\cite{Jon:93} and H.~Langer, B.~Najman, and C.~Tretter~\cite{LanNajTre:06,Naj:83,LanNajTre:08}.

Non-linear dependence of equation~\eqref{eq:intr.spr} on the
spectral parameter~$\lambda$ suggests
that~\eqref{eq:intr.spr}--\eqref{eq:intr.bc} should be regarded as
a spectral problem for a quadratic operator pencil. Although some
spectral properties of such a problem can easily be derived from
the general spectral theory of polynomial operator pencils~\cite{Mar:88}, there have been rather few papers investigating the inverse problem of
reconstructing the potentials $p$ and $q$ from the suitably
defined spectral data. The problem with $p\in W_2^1(0,1)$ and
$q\in L_2(0,1)$ and with Robin boundary conditions was discussed
by M.~Gasymov and G.~Guseinov in their short paper~\cite{GasGus81}
of 1981 containing no proofs. Such problems for (quasi)-periodic
boundary conditions were considered in, e.g.,
\cite{Gus86,GusNab07,Nab04, Nab07, YanGuo11};
however, typically only Borg-type uniqueness results were
established therein. Some non-classical settings of the inverse
spectral problem (e.g., those with mixed given data,
Hochstadt-type problems, or inverse nodal problems) were discussed
in~\cite{Koy11,Koy06,KoyPan07}.

The main aim of the present paper is to investigate in detail the
inverse spectral problem for
equations~\eqref{eq:intr.spr}--\eqref{eq:intr.bc}, under minimal
smoothness assumptions on real-valued potentials~$p$ and $q$. In
particular, the distributional potential $q$ can include e.\,g.\ the
Dirac delta-functions of Coulomb-like singularities that are widely
used in quantum mechanics to model interactions in molecules and
atoms, see the monographs by S.~Albeverio et al.~\cite{AGHH} and by S.~Albeverio and P.~Kurasov~\cite{AlbKur:99} and the extensive references lists therein. For this wide class of problems, we shall
give a complete description of the corresponding spectral data,
suggest a reconstruction algorithm, and establish uniqueness of
reconstruction. Our approach consists in reducing the spectral
problem  for~\eqref{eq:intr.spr}--\eqref{eq:intr.bc} to the one for
a related Dirac operator; we then study possibility of
reconstructing a Dirac operator of special form from its spectral
data. Since various steps of our considerations are explicit, they
form basis for a reconstruction algorithm.

The paper is organized as follows. In the next section, we introduce the main objects of study and formulate the main results. In Section~\ref{sec:Dir}, we show that the spectral problem~\eqref{eq:intr.spr}--\eqref{eq:intr.bc} is closely related to the one for a Dirac operator of a special form. Further we discuss the possibility to transform Dirac operators to some canonical form by means of the so-called transformation operators. Existence and uniqueness of a special Dirac operator (and thus of the corresponding operator pencil) possessing the prescribed spectral data is tackled with in Sections~\ref{sec:Dir-rec} and \ref{sec:Dir-rec-uniq} respectively. Finally, the last Section~\ref{sec:alg} summarizes the reconstruction algorithm and  discusses some possible extensions.

\emph{Notations.} Throughout the paper, $L_{2,\mathbb{R}}(0,1)$ and $W_{2,\mathbb{R}}^{-1}(0,1)$ will stand for the sets of real-valued functions in~$L_2(0,1)$ and distributions in $W_{2}^{-1}(0,1)$, respectively. We denote by $\rho(T)$ and $\sigma(T)$
the resolvent set and the spectrum of a linear operator or a quadratic operator pencil~$T$, and by $\mathcal{M}_2=\mathcal{M}_2(\bC)$ the linear space of $2\times 2$ matrices with complex entries endowed with the Euclidean operator norm. The superscript~$\mathrm{t}$ designates transposition of vectors and matrices, e.g., $(c_1,c_2)^{\mathrm{t}}$ is the column vector~$\binom{c_1}{c_2}$.


\section{Preliminaries and main results}


Equation~\eqref{eq:intr.spr} contains terms depending both on $\lambda$ and $\lambda^2$ and therefore leads to a spectral problem for a quadratic operator pencil that we now introduce.

To begin with, we  recall that the potential~$q$ in~\eqref{eq:intr.spr} is a real-valued distribution in the Sobolev space~$W_2^{-1}(0,1)$ and thus $q=r'$ for some~$r\in L_{2,\mathbb{R}}(0,1)$. The Sturm--Liouville operator with potential~$q$ can be defined following the regularization method due to Savchuk and Shkalikov~\cite{SavShk:1999,SavShk:2003}. Namely, for every absolutely continuous function~$y$, we denote by $y^{[1]}:=y'-ry$ its \emph{quasi-derivative} and introduce the differential expression
\[
    \ell (y):= -\bigl(y^{[1]}\bigr)' - r y^{[1]} - r^2 y
\]
acting on
\[
    \dom \ell = \{y \in AC (0,1) \mid y^{[1]} \in AC[0,1], \ \ell(y) \in L_2(0,1)\}.
\]
We now define the operator~$A$ via
\[
    Ay = \ell (y)
\]
on the domain
\[
    \dom A:=\{y \in\dom \ell \mid y(0)=y(1) =0\}.
\]

A straightforward verification shows that $\ell (y) = - y'' + qy$ in
the sense of distributions and, if $q$ is integrable, even in the usual sense.
Therefore for regular~$q$, $A$ is the standard Sturm--Liouville operator with potential~$q$ and Dirichlet boundary conditions. It is
known~\cite{SavShk:1999,SavShk:2003} that if $q\in W_2^{-1}(0,1)$
is real-valued, then the operator $A$ is self-adjoint, bounded
below, and has a simple discrete spectrum. The Green function of the operator~$A$ is continuous in the square $[0,1]\times[0,1]$, so that the resolvent of~$A$ is of Hilbert--Schmidt class.

Further, we denote by $B$ the operator of multiplication by the
potential~$p\in L_2(0,1)$. The operator~$B$ is in general
unbounded; however, since $BA^{-1}$ is of Hilbert--Schmidt class,
$B$ is $A$-compact~\cite[Ch.~IV]{Kat:1966}. In particular, $\dom B
\supset\dom A$ and $B$ is bounded relative to~$A$ with relative
$A$-bound~$0$~\cite[Lemma~III.2.16]{EngNag:2000}. Finally, $I$
stands for the identity operator in~$L_2(0,1)$.

Now the spectral problem~\eqref{eq:intr.spr}--\eqref{eq:intr.bc} can
be regarded as the spectral problem for the \emph{quadratic
operator pencil} $T_{p,q}$ defined as
\[
   T_{p,q}(\la):=\la^2I - 2 \la B - A
\]
for $\lambda\in \bC$ on the $\la$-independent domain~$\dom T_{p,q}:=
\dom A$. The above-mentioned properties of the operators $A$ and $B$
imply that, for every $\lambda\in\bC$, the
operator~$T_{p,q}(\lambda)$ is well defined and closed on $\dom
T_{p,q}$. Before continuing, we recall the following notions of the spectral theory
of operator pencils, see~\cite{Mar:88}.

\begin{definition}
The \emph{spectrum}~$\sigma(T_{p,q})$ of the operator
pencil~$T_{p,q}$ is the set of all $\lambda\in\bC$ for
which~$T_{p,q}(\lambda)$ is not invertible, i.e.,
\[
    \sigma(T_{p,q})=\{\lambda\in\mathbb{C}\mid 0\in\sigma(T_{p,q}(\lambda))\}.
\]
A number $\lambda\in\bC$ is called the \emph{eigenvalue} of
$T_{p,q}$ if $T_{p,q}(\lambda)y=0$ for some non-zero
function~$y\in\dom T_{p,q}$, which is then the corresponding
\emph{eigenfunction}. Finally,
\[
    \rho(T_{p,q}):=\mathbb{C}\setminus\sigma(T_{p,q})
\]
is the \emph{resolvent set}~ of the operator pencil~$T_{p,q}$.
\end{definition}

It was shown in~\cite{Pro:2011a} that the spectrum of the
pencil~$T_{p,q}$ consists entirely of eigenvalues and that
$\sigma(T_{p,q})$ is a discrete subset of~$\bC$.
In general, $T_{p,q}$ can possess non-real and/or non-simple
eigenvalues; the latter means that the algebraic multiplicity of
an eigenvalue can be greater than~$1$~\cite{Mar:88}. The
approach to the inverse spectral problem for $T_{p,q}$ we are
going to use is currently well understood only for the case where
the spectrum of~$T_{p,q}$ is real and simple. This happens e.\,g.\ when there is a
$\la_0\in\bR$ such that the operator~$T_{p,q}(\la_0)$ is negative~\cite{Pro:2011a}, i.e., when $T_{p,q}$ is a hyperbolic pencil~\cite[Ch.~31]{Mar:88}.
Making the shift $\la \mapsto \la +\la_0$ if necessary, we may
(and shall) assume that $\la_0=0$.

Therefore, our standing assumption throughout the paper is that
\begin{itemize}
\item[(A)] $p\in L_{2,\mathbb{R}}(0,1)$, $q\in W_{2,\mathbb{R}}^{-1}(0,1)$, and the operator~$A$ is positive.
\end{itemize}

Under assumption~(A), the eigenvalues of~$T_{p,q}$ are all real,
simple, and can be labeled in increasing order as $\la_n$ for
$n\in\bZ^*:=\bZ\setminus\{0\}$ so that
\[
    \la_n = \pi n + p_0 + \tilde \la_n,
\]
where $p_0:=\int_0^1 p(x)\,dx$ and $(\tilde\la_n)$ is a sequence
in~$\ell_2(\bZ^*)$~\cite{Pro:2011a,Pro:2012}. Observe also that
under assumption~(A) the point $\la=0$ is in the resolvent set
of~$T_{p,q}$.

We observe that equation~\eqref{eq:intr.spr} can be recast using quasi-derivatives as
\[
    \ell(y) +2\la p y = \la^2y
\]
and that for every complex $a$ and $b$ it possesses a unique solution satisfying the initial conditions $y(0)=a$ and $y^{[1]}(0)=b$. This allows to introduce the norming constants in the following way.

\begin{definition}
For an eigenvalue $\la_n$ of $T_{p,q}$, denote by $y_n$ the
corresponding eigenfunction normalized by the initial
conditions~$y_n(0)=0$ and $y_n^{[1]}(0)=\la_n$. Then the quantity
\begin{equation}\label{eq:pre.al}
    \alpha_n:=2\int_0^1y_n^2(t)dt-\frac{2}{\lambda_n}\int_0^1p(t)y^2_n(t)dt
\end{equation}
is called the \emph{norming constant} corresponding to the
eigenvalue~$\lambda_n$.
\end{definition}

Although this definition looks somewhat artificial, there is a good
reason for defining the norming constants via~\eqref{eq:pre.al}.
Firstly, for $p\equiv0$, problem
\eqref{eq:intr.spr}--\eqref{eq:intr.bc} becomes the spectral
problem for the Sturm--Liouville operator~$A$, and \eqref{eq:pre.al} up
to a constant factor agrees with the common definition of the
norming constant~\cite{GelLev:51}. Secondly, denoting by $T'_{p,q}$ the
$\lambda$-derivative of~$T_{p,q}$, we find that
\[
    \bigl(T'_{p,q}(\la_n)y_n,y_n\bigr) =  \la_n \al_n,
\]
whence $\al_n$ determines the \emph{type} of the
eigenvalue~$\la_n$, see~\cite{Mar:88}. In the next section, we
shall transform the spectral problem for the operator pencil to
that for some  Dirac operator~$\sD(P)$, and \eqref{eq:pre.al}
agrees with the standard definition of the norming constant
for~$\sD(P)$. 
Next, we mention that the function
$u(x,t):=y_n(x)e^{\la_n t}$ is a solution of the corresponding
evolution equation $T_{p,q}(d/dt)u=0$, and its \emph{energy}
$(Au,u)+ (\dot{u},\dot{u})$, with $\dot{u}$ denoting $\partial
u/\partial t$, is equal to $\la_n^2
 \al_n e^{2\la_n t}$.

\begin{remark}\label{rem:pre.alpha} It follows from~\cite{Pro:2011a}
that if $A$ is invertible, then its positivity (cf.\ assumption~(A)) is necessary and
sufficient for all $\al_n$ to be positive.
\end{remark}

\begin{definition} Assume that $\la$ is an eigenvalue of the quadratic operator pencil~$T_{p,q}$ and that $\al$ is the corresponding norming constant. Then~$(\la,\al)$ is called the \emph{spectral eigenpair} of~$T_{p,q}$. The \emph{spectral data} $\sd(T_{p,q})$ of the pencil~$T_{p,q}$ is the set
\[
    \sd(T_{p,q}):=\{(\la,\al) \mid \la \in \sigma(T_{p,q})\}
\]
of all its spectral eigenpairs.
\end{definition}

The inverse spectral problem of interest is to reconstruct the
potentials~$p$ and $q$ of the operator pencil~$T_{p,q}$ given its
spectral data~$\sd(T_{p,q})$. Some properties of spectral data for the class of quadratic pencils~$T_{p,q}$ under consideration was established in~\cite{Pro:2011a}. Our aim in this paper is, firstly, to give a complete description of the set of the spectral data and, secondly, to find and justify an algorithm reconstructing the potentials $p$ and $q$ from the spectral data.

We observe that when $p\equiv 0$, the spectral problem for the
operator pencil~$T_{0,q}$ becomes the spectral problem $Ay =
\la^2y$ for the Sturm--Liouville operator~$A$.
Then $\la_{-n} = - \la_n$, $\al_{-n}=\al_n$, and $\al_n$ of~\eqref{eq:pre.al} agrees with the
standard definition of a norming constant~\cite{GelLev:51}.
For the Sturm--Liouville operator~$A$ with a real-valued $q\in L_1(0,1)$ and Robin boundary conditions, it was proved in~\cite{GelLev:51}
that the spectrum~$(\la_n^2)_{n\in\bN}$ of~$A$ and the
sequence $(\al_n)_{n\in\bN}$ of the corresponding norming constants
uniquely determine the potential~$q$; the case of a
distributional potential~$q\in W_2^{-1}(0,1)$ was treated in e.g.~\cite{HryMyk:2003a,ColMcL:93,SavShk:2005}.
The above references also suggest algorithms of reconstructing the potential~$q$ from the spectral data of~$A$.

The operator pencil~$T_{p,q}$ contains two real-valued potentials
$p$ and $q$ to be determined in the inverse problem; however, the
spectral data of~$T_{p,q}$ are twice as large as for a standard
Sturm--Liouville operator. Therefore one may hope that the inverse
spectral problem of reconstructing $p$ and $q$ from the spectral
data of~$T_{p,q}$ is well posed. Our first main result gives uniqueness of reconstruction.

\begin{theorem}
\label{thm:pre.uniq}
Under assumption~(A), the operator pencil~$T_{p,q}$ is uniquely
determined by its spectral data~$\sd(T_{p,q})$.
\end{theorem}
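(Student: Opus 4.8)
The plan is to reduce everything to an inverse spectral problem for a Dirac operator, as foreshadowed in the introduction. First I would invoke the correspondence developed in Section~\ref{sec:Dir}: under assumption~(A) the pair $(p,q)$ determines, and is determined by, a matrix potential $P\in L_2((0,1),\mathcal{M}_2)$ of a prescribed special form, in such a way that the eigenvalues of $T_{p,q}$ coincide with those of the Dirac operator $\sD(P)$ and the norming constant $\al_n$ of~\eqref{eq:pre.al} equals the norming constant of $\sD(P)$ at $\la_n$; in particular $\sd(T_{p,q})$ and $\sd(\sD(P))$ are the same set, and $p$ together with the primitive $r$ of $q$ (hence $q$ itself) can be read off explicitly from $P$. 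With this in hand, Theorem~\ref{thm:pre.uniq} becomes the assertion that a Dirac operator of the special form under consideration is uniquely determined by its eigenvalues $(\la_n)_{n\in\bZ^*}$ and norming constants $(\al_n)_{n\in\bZ^*}$.

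To prove the latter I would use transformation operators. Writing $\sD(0)$ for the free Dirac operator (corresponding to $p\equiv q\equiv 0$), one shows --- again this belongs to the analysis of Section~\ref{sec:Dir} --- that the solution $\mathbf{u}(\cdot,\la)$ of $\sD(P)\mathbf{u}=\la\mathbf{u}$ with the prescribed initial data at $x=0$ is obtained from the free solution $\mathbf{u}_0(\cdot,\la)$ by a Volterra transformation $\mathbf{u}(x,\la)=\mathbf{u}_0(x,\la)+\int_0^x K(x,t)\mathbf{u}_0(t,\la)\,dt$ with a Hilbert--Schmidt kernel $K$, from which $P$ is recovered through the boundary values of $K$. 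It therefore suffices to show that $\sd(T_{p,q})$ determines $K$. For this I would derive the Gelfand--Levitan--Marchenko equation for $K$: expanding an arbitrary vector function in the eigenfunction system of $\sD(P)$, weighting by $\al_n^{-1}$, and using the transformation formula, one arrives at an identity $F(x,t)+K(x,t)+\int_0^x K(x,s)F(s,t)\,ds=0$ for $0<t<x<1$, where the matrix kernel $F$ is assembled explicitly from the numbers $\la_n,\al_n$ and the free solutions $\mathbf{u}_0(\cdot,\la_n)$. Since $F$ is thus completely determined by $\sd(T_{p,q})$, and since --- using that all $\al_n>0$ (cf.\ Remark~\ref{rem:pre.alpha}) --- the integral operator $I+F$ restricted to $L_2((0,x),\bC^2)$ is boundedly invertible for every $x\in(0,1]$, the GLM equation has at most one solution; hence $K$, and with it $P$ and finally $(p,q)$, is uniquely determined. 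Equivalently: if $T_{p,q}$ and $T_{\hat p,\hat q}$ share the same spectral data, then $F=\hat F$, so by unique solvability $K=\hat K$, whence $P=\hat P$ and $p=\hat p$, $q=\hat q$.

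The main obstacle I expect is that assumption~(A) allows $q$ to be a genuine distribution in $W_2^{-1}$, so that the Dirac potential $P$ is only $L_2$; consequently the existence of the transformation operator with an $L_2$-type kernel, the derivation of the GLM equation, and its unique solvability must all be established in this non-smooth framework rather than borrowed from the classical theory of Dirac operators with continuous coefficients --- this is exactly what the estimates of Sections~\ref{sec:Dir}--\ref{sec:Dir-rec-uniq} are for. A secondary point that must be handled with care is the bookkeeping of the special structure: one has to check that the constraints on $P$ inherited from the Sturm--Liouville origin of the problem, together with the way the Dirichlet conditions $y(0)=y(1)=0$ translate into separated boundary conditions for the Dirac system, are consistent with the transformation-operator and GLM constructions, so that the reconstruction never leaves the admissible class. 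A purely qualitative uniqueness statement could alternatively be obtained by showing that the Weyl--Titchmarsh function of $\sD(P)$ is determined by $\sd(T_{p,q})$ and appealing to a Borg--Marchenko-type theorem for Dirac operators; I prefer the transformation-operator route because it is constructive and dovetails with the reconstruction algorithm summarized in Section~\ref{sec:alg}.
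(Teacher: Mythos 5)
There is a genuine gap, and it sits in your very first step. You assert that under assumption~(A) the pair $(p,q)$ determines and is determined by a matrix potential $P$ of the special form, with $\sd(T_{p,q})=\sd(\sD(P))$, so that Theorem~\ref{thm:pre.uniq} ``becomes'' uniqueness of a Dirac operator from its eigenvalues and norming constants. Both halves of that claim fail. The passage from $(p,q)$ to $P$ requires a factorization $q=v'+v^2$, i.e.\ a choice of positive solution of $\ell(y)=0$, which is not unique, so one pencil is associated with many Dirac operators $\sD(P)$. More importantly, by Remark~\ref{rem:Dir.0} the operator $\sD(P)$ always has the extra eigenvalue $\la=0$, which is \emph{not} an eigenvalue of $T_{p,q}$, and its norming constant $\al_0$ is not determined by $\sd(T_{p,q})$; the different admissible factorizations correspond precisely to different values of $\al_0$ and hence to genuinely different potentials $P\in\cP$. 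Consequently your Gelfand--Levitan--Marchenko kernel $F$, which must be assembled from \emph{all} eigenpairs of $\sD(P)$ including $(0,\al_0)$, is not determined by the pencil's spectral data, and the implication ``same $\sd(T_{p,q})$ $\Rightarrow$ same $F$ $\Rightarrow$ same $K$ $\Rightarrow$ same $P$'' breaks at the first arrow. Indeed the conclusion $P=\widehat P$ is false in general: two pencils with identical spectral data lead to Dirac operators whose potentials in $\cP$ differ whenever $\al_0\ne\hat\al_0$.

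What your argument can deliver (uniqueness of $P\in\cP$ given the \emph{full} Dirac spectral data, i.e.\ including the pair at $\la=0$) is essentially the content of Theorem~\ref{thm:exist_uniq_P}, which the paper proves by a transformation-operator argument rather than GLM. The missing ingredient, which is the actual substance of Section~\ref{sec:Dir-rec-uniq}, is the proof that the residual freedom in $\al_0$ does not affect $(p,q)$: via the double commutation formulas (Proposition~\ref{pro:uniq.Q-and-tildeQ}, Corollary~\ref{cor:uniq.Q-and-tildeQ}) one computes that changing $\al_0$ replaces $p_{12}$ by $p_{12}-(\log w)'$ while leaving $p_{22}$ fixed (Lemma~\ref{lem:uniq.P-and-Ptilde}), and then checks that $-p_{12}'+p_{12}^2$ is invariant under this Darboux-type change (Corollary~\ref{cor:uni.P-and-tildeP}). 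Only after this invariance is established can one argue as in the paper's proof of Theorem~\ref{thm:pre.uniq}: match the undetermined constant ($\tilde\al_0:=\hat\al_0$), apply the isospectral uniqueness result in $\cP$, and conclude the pencils coincide. Your ``secondary bookkeeping point'' gestures at structure constraints but does not identify, let alone close, this gap; as written the proposal proves the wrong uniqueness statement.
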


It follows from~\cite{Pro:2011a,Pro:2012} (cf. also the result of~\cite{GasGus81} for $p\in W_2^1(0,1)$ and $q\in L_2(0,1)$) that the
spectral data for the operator pencils under consideration belong
to the following set.

\begin{definition}\label{def:SD}
We denote by~$\SD$ the family of all sets~$\{(\la_n,\al_n)\}_{n\in\bZ^*}$ consisting of pairs~$(\la_n,\al_n)$ of real numbers satisfying the following properties:
\begin{enumerate}
    \item[(i)] $\lambda_n$ are nonzero, strictly increase with $n\in\bZ^*$, and have the representation $\lambda_n = \pi n + h + \tilde{\lambda}_n$ for some $h\in\bR$ and a sequence $(\tilde\la_n)$ in $\ell_2(\bZ^*)$;
    \item[(ii)] $\alpha_n>0$ for all $n\in\bZ^*$ and the numbers $\tilde \alpha_n:=\alpha_n-1$ form an $\ell_2(\bZ^*)$-sequence.
\end{enumerate}
\end{definition}

Our second result claims that, conversely, every element of~$\SD$ is spectral data for some operator pencils~$T_{p,q}$ under consideration. In particular, it shows that conditions~(i)--(ii) above give a complete description of the spectral data for the operator pencils~$T_{p,q}$ with $p\in L_{2,\mathbb{R}}(0,1)$ and~$q\in W_{2,\mathbb{R}}^{-1}(0,1)$.

\begin{theorem}\label{thm:pre.exist}
For every~$\sd\in\SD$, there exists~$p\in L_{2,\mathbb{R}}(0,1)$ and~$q\in W_{2,\mathbb{R}}^{-1}(0,1)$ such that~$\sd$ is the spectral data for the operator pencil~$T_{p,q}$.
\end{theorem}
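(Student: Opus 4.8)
The plan is to establish the existence statement by going through the Dirac-operator reformulation that is promised in the introduction. First I would fix $\sd=\{(\la_n,\al_n)\}_{n\in\bZ^*}\in\SD$ and recall from Section~\ref{sec:Dir} that the spectral problem for a pencil $T_{p,q}$ is equivalent to an eigenvalue problem for a Dirac operator $\sD(P)$ of the special form with potential matrix $P$ built from $p$ and $r$ (where $q=r'$); under assumption~(A) this Dirac operator is self-adjoint with simple real spectrum, and the norming constants $\al_n$ of~\eqref{eq:pre.al} coincide with the standard Dirac norming constants. So it suffices to produce a Dirac operator of the required special form whose spectral data is the prescribed $\sd$, and then read off $p$ and $q$ from its potential. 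This reduces Theorem~\ref{thm:pre.exist} to the existence half of the inverse spectral problem for Dirac operators of that special form, which is exactly what Section~\ref{sec:Dir-rec} is set up to do.

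Next I would carry out the reconstruction for the free (canonical) Dirac operator. The conditions~(i)--(ii) defining $\SD$ say precisely that $(\la_n)$ is an $\ell_2$-type perturbation of the Dirichlet spectrum $\{\pi n+h\}$ of a shifted free Dirac operator and that $(\al_n-1)\in\ell_2(\bZ^*)$ with all $\al_n>0$; these are exactly the hypotheses under which the Gelfand--Levitan / Krein-type machinery for Dirac operators applies. Concretely, I would form the sequence of ``model'' spectral data, write down the associated kernel and the corresponding Gelfand--Levitan equation (a Fredholm equation of the second kind with a kernel in the right Sobolev/$L_2$ class), and solve it to obtain a transformation operator $I+K$, with $K$ a Hilbert--Schmidt (indeed triangular) kernel of the appropriate smoothness. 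Applying $I+K$ to the free Dirac operator yields a Dirac operator whose potential $P$ has the structure dictated by the special form — this structural constraint is what forces the off-diagonal entries to be a genuine function $p\in L_{2,\mathbb R}(0,1)$ and the diagonal part to correspond to $q=r'$ with $r\in L_{2,\mathbb R}(0,1)$. Here I would invoke the characterization, from the Dirac inverse-theory literature adapted in Sections~\ref{sec:Dir}--\ref{sec:Dir-rec}, that data of class $\SD$ is reconstructible within this special class.

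Then I would verify that the reconstructed pencil $T_{p,q}$ actually has $\sd$ as its spectral data and satisfies~(A): positivity of $A$ follows because all $\al_n>0$ together with $0\notin\{\la_n\}$ forces $T_{p,q}(0)=-A$ to be negative, i.e. $A>0$ (this is the converse direction of Remark~\ref{rem:pre.alpha} and of the hyperbolicity criterion quoted from~\cite{Pro:2011a}), so the labeling and asymptotics of~(i) are consistent with what the pencil produces, and the norming constants computed from~\eqref{eq:pre.al} match by construction of the transformation operator. The main obstacle I expect is precisely the bookkeeping of the \emph{special form}: one must check that solving the Gelfand--Levitan equation with data in $\SD$ does not merely give \emph{some} self-adjoint Dirac operator but one of the restricted shape corresponding to an energy-dependent Sturm--Liouville pencil, and that the resulting coefficients land in $L_{2,\mathbb R}(0,1)$ and $W_{2,\mathbb R}^{-1}(0,1)$ rather than in a larger space. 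Establishing this symmetry/structure preservation — together with the sharp regularity of the kernel $K$ needed to land in exactly these classes — is the technical heart of the argument; everything else (solvability of the Fredholm equation, self-adjointness, spectral asymptotics) is by now standard and will be handled with the estimates assembled in the preceding sections.
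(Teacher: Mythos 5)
Your overall strategy (pass to the associated Dirac operator, solve its inverse problem, read off $p$ and $q$) is the same as the paper's, but two linked steps that carry the actual weight of the proof are missing. First, the data $\sd\in\SD$ indexed by $\bZ^*$ is \emph{not} complete spectral data for any Dirac operator of the relevant kind: by Remark~\ref{rem:Dir.0}, every operator $\sD(P)$ with $P$ of the form~\eqref{eq:Dir.P} has $\la=0$ as an eigenvalue, and Dirac spectral data are indexed by all of $\bZ$. The paper therefore augments $\sd$ with an extra pair $(\la_0,\al_0)=(0,\al_0)$, $\al_0>0$ arbitrary, before any reconstruction can be set up; your proposal never introduces this augmentation, so the Gelfand--Levitan step you describe cannot even be formulated for the given data. (The resulting arbitrariness of $\al_0$ is what later creates the uniqueness issue resolved in Section~\ref{sec:Dir-rec-uniq}; it is not a cosmetic detail.)

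Second, you assert that solving the Gelfand--Levitan equation ``yields a Dirac operator whose potential $P$ has the structure dictated by the special form,'' and when you reach what you correctly call the technical heart --- showing that one lands in the restricted class $\cP$ (with $p_{11}=0$, $p_{12}=p_{21}$) rather than merely in some self-adjoint class --- you simply ``invoke the characterization \dots that data of class $\SD$ is reconstructible within this special class,'' which is precisely the statement to be proved. The standard inverse theory only produces a potential $Q$ in the (shifted) AKNS normal form $\cQ_h$ (Proposition~\ref{pro:exist.Q}); the paper then has to convert $\sD(Q)$ into an isospectral $\sD(P)$ with $P\in\cP$, and this is nontrivial: one constructs the gauge $R=e^{\theta_2 J}$ with $\theta_2$ solving the nonlinear ODE~\eqref{eq:theta}, and isospectrality requires $\theta_2(1)\in\pi\bZ$ (Theorem~\ref{thm:sp.data.coinc.crit}), which is proved in Theorem~\ref{thm:exist_uniq_P} \emph{using the fact that $0$ is an eigenvalue of $\sD(Q)$} --- i.e., using the augmentation you omitted. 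Finally, note that in the special form the pencil potential $p$ sits in the diagonal entry ($p_{22}=2p$) while $q$ is recovered from the off-diagonal entry through the Riccati/Miura map $q=-p_{12}'+p_{12}^2$ in the distributional sense, not as you describe; getting $q\in W_{2,\mathbb{R}}^{-1}(0,1)$ comes from this map with $p_{12}\in L_{2,\mathbb{R}}(0,1)$, not from extra regularity of the Gelfand--Levitan kernel.
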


The proof of this theorem is constructive and suggests the explicit reconstruction algorithm determining the potentials~$p$ and $q$ from the set~$\sd$ belonging to~$\SD$; see Section~\ref{sec:alg}.

Our approach consists in reducing the spectral problem for $T_{p,q}$
to the one for a Dirac operator of a special form acting in~$L_2(0,1)\times L_2(0,1)$, see Section~\ref{sec:Dir}. Under a suitable unitary
gauge transformation this Dirac operator takes the ``shifted'' AKNS
normal form. For AKNS Dirac operators, the direct and inverse
spectral problems are well understood,
see~\cite{LevSar:1991,GasDza:1966,AlbHryMk:2005:RJMP}. We shall use
the known methods to first reconstruct the Dirac operator in the
``shifted'' AKNS form from the given data and then to transform this
Dirac operator to the one that is directly associated with some
pencil~$T_{p,q}$. The latter gives the required solution of the
inverse spectral problem of interest.


\section{Reduction to the Dirac system}\label{sec:Dir}


In this section we shall show that under the standing assumption~(A) the spectral problem for the operator pencil~$T_{p,q}$ can be reduced to the one for a special Dirac operator. We start with the following observation.

\begin{lemma}\label{lem:Dir.pos-sol}
Under the standing assumption~(A) the equation $\ell(y)=0$ possesses a
solution~$y$ that is strictly positive on~$[0,1]$.
\end{lemma}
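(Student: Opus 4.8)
The plan is to exploit the positivity of the operator $A$ (assumption~(A)) together with oscillation-theoretic properties of the Sturm--Liouville expression $\ell$. First I would recall that, since $A$ is positive, the point $\lambda=0$ lies in the resolvent set of $A$; in particular the Dirichlet problem $\ell(y)=0$, $y(0)=y(1)=0$ has only the trivial solution. Hence if $u$ denotes the solution of $\ell(u)=0$ with initial data $u(0)=0$, $u^{[1]}(0)=1$, then $u(1)\neq 0$, and by normalizing we may assume $u(1)>0$; moreover $u$ does not vanish on the open interval $(0,1)$. Indeed, were $u(x_0)=0$ for some $x_0\in(0,1)$, then the restriction of $A$ to $(0,x_0)$ with Dirichlet conditions would have $0$ as an eigenvalue, forcing the first Dirichlet eigenvalue of $A$ on the larger interval $(0,1)$ to be non-positive by domain monotonicity of eigenvalues, contradicting positivity of $A$. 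Thus $u>0$ on $(0,1]$.

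Next I would produce a second solution that is positive near the left endpoint. Let $v$ be the solution of $\ell(v)=0$ with $v(1)=0$, $v^{[1]}(1)=-1$; by the same oscillation argument applied from the right endpoint, $v>0$ on $[0,1)$ and $v(0)>0$. The Wronskian-type quantity $W:=u\,v^{[1]}-u^{[1]}v$ is constant in $x$ (this is the standard first-order system computation for the quasi-derivative formulation, which is valid here since $u,u^{[1]},v,v^{[1]}\in AC[0,1]$); evaluating at $x=1$ gives $W=u(1)\cdot(-1)-1\cdot 0=-u(1)<0$, so $W\neq 0$ and $u,v$ are linearly independent. Now set
\[
    y:=\varepsilon u+v
\]
for a small parameter $\varepsilon>0$ to be chosen. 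On $[0,1)$ we have $v>0$, and on $(0,1]$ we have $u>0$; since $[0,1]=[0,1)\cup(0,1]$, the only issue is to make $y$ strictly positive at both endpoints simultaneously and, by a compactness argument, on all of $[0,1]$.

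At the right endpoint, $y(1)=\varepsilon u(1)>0$; at the left endpoint, $y(0)=\varepsilon u(0)+v(0)$, and since $v(0)>0$ this is positive for every $\varepsilon\ge 0$. For the interior, note that $v$ is continuous and strictly positive on the compact set $[0,1-\delta]$ for any $\delta>0$, hence bounded below there by a positive constant, while $u$ is bounded on $[0,1]$; choosing $\varepsilon$ small makes $y=\varepsilon u+v>0$ on $[0,1-\delta]$. On the remaining piece $[1-\delta,1]$ one uses that $u$ is continuous and strictly positive there (bounded below by a positive constant, since $u>0$ on $(0,1]$ and in particular on the compact set $[1-\delta,1]$), so $\varepsilon u$ dominates any possible negative excursion of $v$ near $1$: more precisely, $y=\varepsilon u+v\ge \varepsilon\min_{[1-\delta,1]}u-\|v\|_{C[1-\delta,1]}$ is positive once $\varepsilon$ is large enough relative to a fixed $\delta$. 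The tension between "$\varepsilon$ small" on $[0,1-\delta]$ and "$\varepsilon$ large" on $[1-\delta,1]$ is resolved by first fixing $\delta$ so small that $v>0$ on $[0,1-\delta]$ forces nothing (it is automatic) and then picking $\varepsilon$; in fact the cleanest route is: since $v>0$ on $[0,1)$ and $u>0$ on $(0,1]$, the function $u/v$ is well defined, continuous and positive on $[0,1)$ and tends to $+\infty$ as $x\to 1-$, so $\inf_{[0,1)}(u/v)>0$; taking any $\varepsilon>0$ then gives $y=v(1+\varepsilon u/v)>0$ on $[0,1)$, and $y(1)=\varepsilon u(1)>0$, whence $y>0$ on $[0,1]$.

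The main obstacle is purely bookkeeping at the endpoints: ensuring strict positivity on the closed interval rather than merely non-negativity, and making sure the oscillation/comparison arguments are correctly phrased in the Savchuk--Shkalikov quasi-derivative framework (where $y$ need not be $C^1$ but $y^{[1]}=y'-ry$ is absolutely continuous). Both are handled by the constancy of the modified Wronskian $W=u v^{[1]}-u^{[1]}v$, which substitutes for the classical Wronskian, and by the fact — guaranteed by positivity of $A$ via domain monotonicity of Dirichlet eigenvalues — that neither $u$ nor $v$ has an interior zero.
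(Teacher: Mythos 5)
Your argument is correct in substance, but it takes a genuinely different route from the paper. The paper perturbs the boundary condition at the left endpoint: it introduces the family $A_h$ with condition $y(0)-hy^{[1]}(0)=0$, proves norm-resolvent continuity in $h$ (an explicit rank-one resolvent formula), picks $h>0$ with $A_h\gg0$, and then shows the solution with $y(0)=h$, $y^{[1]}(0)=1$ cannot vanish on $(0,1]$ by truncating it at a hypothetical zero and computing that the quadratic form $\mathfrak{a}_h$ vanishes on the truncation. You instead work directly with the two solutions $u$ and $v$ vanishing at the respective endpoints, rule out interior zeros of each by the variational comparison (if $u(x_0)=0$, extend $u|_{(0,x_0)}$ by zero into the form domain of $A$ and get a vector on which the form of $A$ vanishes, contradicting positivity --- this is exactly the paper's truncation computation, just phrased as domain monotonicity of Dirichlet eigenvalues, and it does go through in the Savchuk--Shkalikov form setting since the form domain is $W_2^1$ with Dirichlet conditions and extension by zero is legitimate), and then take $y=\varepsilon u+v$. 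Your route buys simplicity: it avoids the norm-resolvent-continuity argument for $A_h$ entirely; the paper's route produces a single solution positive at both endpoints from the outset and keeps the whole argument inside one operator family. Two small clean-ups: the claim $\inf_{[0,1)}u/v>0$ is false (the infimum is $0$ because $u(0)=0$, $v(0)>0$), but it is also unnecessary --- on $[0,1)$ you have $v>0$ and $\varepsilon u\ge0$, so $y>0$ there, and $y(1)=\varepsilon u(1)>0$, so any $\varepsilon>0$ (even $\varepsilon=1$) works with no $\delta$-bookkeeping; and it is the truncation/form (or monotonicity) argument, not the constancy of the modified Wronskian, that carries the ``no interior zero'' step --- the Wronskian is not actually needed anywhere in your proof.
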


\begin{proof}
For $h\in\bR$, consider an operator~$A_h$ defined via $A_hy=\ell (y)$ on the set of functions in~$\dom \ell$ verifying the boundary conditions
\[
    y(0)-hy^{[1]}(0)=y(1)=0.
\]
The operators $A_h$ are self-adjoint and have discrete spectra. We show that~$A_h$ depend continuously on~$h$ in the norm resolvent sense.

Indeed, fix a non-real $\la$ and denote by $\varphi_-$ and $\varphi_+$ solutions of the equation~$\ell(y) = \la y$ satisfying the initial conditions~$\varphi_-(0)=0$, $\varphi_-^{[1]}(0)=1$ and the terminal conditions~$\varphi_+(1)=0$, $\varphi_+^{[1]}(1)=1$, respectively. Take an arbitrary $f\in L_2(0,1)$ and set $g_h:= (A_h - \la)^{-1}f$. The function $z:= g_h-g_0$ solves the equation $\ell(z) = \la z$ and satisfies the terminal condition~$z(1)=0$; therefore there exists a constant $c_h(f)$ such that $z=c_h(f)\varphi_+$.
Since the function $g_h = g_0 + c_h(f)\varphi_+$ satisfies the initial condition~$g_h(0) = h g_h^{[1]}(0)$, we conclude that
\[
    c_h(f) = \frac{h g_0^{[1]}(0)}{\varphi_+(0)-h\varphi_+^{[1]}(0)}.
\]
The denominator never vanishes for $h\in\bR$ since the non-real~$\la$ is in the resolvent set of all operators $A_h$.

With
    $W:= \varphi_+\varphi_-^{[1]}-\varphi_+^{[1]}\varphi_-$
denoting the modified Wronskian, we find that $W$ is constant on $[0,1]$ and that
\[
    g_0(x)
       =  \frac{\varphi_-(x)}{W}\int_x^1\varphi_+(t)f(t)\,dt
        + \frac{\varphi_+(x)}{W}\int_0^x\varphi_-(t)f(t)\,dt,
\]
whence
\[
    g_0^{[1]}(0) = \frac1W \int_0^1\varphi_+(t)f(t)\,dt.
\]
Therefore the resolvent difference $(A_h-\la)^{-1}-(A_0-\la)^{-1}$ is a rank one operator equal to
\[
    \frac{h}{W}\frac{(\cdot, \overline{\varphi_+}) \varphi_+}{{\varphi_+(0)-h\varphi_+^{[1]}(0)}}
\]
and thus the norm resolvent continuity of $A_h$ as a function of~$h\in \bR$ follows.

By assumption~(A), the operator $A=A_0$ is uniformly positive; thus the above continuity in~$h$ implies that there is an $h>0$ such that $A_h \gg 0$. Fix one such an $h$ and denote by $y_h$ the solution of the equation $\ell(y)=0$ satisfying the initial conditions $y(0)=h$ and $y^{[1]}(0)=1$. We claim that $y_h$ stays positive over~$[0,1]$.

To prove this, we consider the quadratic form $\mathfrak{a}_h$ of the operator~$A_h$. Integration by parts shows that
\[
    \mathfrak{a}_h[y] = \|y'\|^2 - 2 \myRe (ry',y) + \frac1h|y(0)|^2
\]
for $y\in\dom A_h$. Next (cf.~\cite{HryMyk:2001}), the quadratic form $\myRe (ry',y)$ is relatively bounded
with respect to the form
\[
    \tilde{\mathfrak{a}}_h[y] := \|y'\|^2 + \frac1h|y(0)|^2
\]
with relative bound~$0$. Since the quadratic form $\tilde{\mathfrak{a}}_h$ is closed on the domain
\[
    \dom \tilde{\mathfrak{a}}_h :=\{y\in W_2^1(0,1) \mid y(1)=0\},
\]
Theorem~VI.1.33 of~\cite{Kat:1966} implies that the quadratic form
$\mathfrak{a}_h$ is closed on the same domain.

Now assume, on the contrary, that the above function~$y_h$ does not remain positive over~$[0,1]$  and denote by $x_0\in(0,1]$ a zero of $y_h$. Then the function
\[
    z(x)
        :=\begin{cases}
            y_h(x) & \text{ for } x<x_0; \\
            0      & \text{ for } x\ge x_0 \\
          \end{cases}
\]
belongs to the domain of the quadratic form $\mathfrak{a}_h$ of~$A_h$. Moreover,
integration by parts in the expression
\[
    \mathfrak{a}_h[z]
        = \int_0^{x_0} |y_h'(x)|^2\,dx
        - 2 \myRe\int_0^{x_0} (r y'_h\overline {y_h})(x)\,dx +\frac1h|y_h(0)|^2
\]
shows that $\mathfrak{a}_h[z]=0$, which contradicts uniform positivity of the operator~$A_h$. The derived contradiction shows the assumption on $y_h$ was wrong, and thus $y_h$
is a solution of $\ell(y)=0$ that is positive over~$[0,1]$. The proof is complete.
\end{proof}

Now we take a solution $y$ of the equation~$\ell(y)=0$ that stays positive on~$[0,1]$ and set~$v:=y'/y$. Since $y' = y^{[1]} + ry \in L_2(0,1)$, the function~$v$ is in $L_2(0,1)$. Moreover, direct calculations show that $q=v'+v^2$ (so that $q$ is a Miura
potential, see~\cite{KapPerShuTop:2005}) and that the operator~$A$ can
be written in the factorized form, viz.
\begin{equation}\label{eq:Dir.Afact}
    Ay  = -\Bigl(\frac{d}{dx}+v\Bigr)\Bigl(\frac{d}{dx}-v\Bigr)y.
\end{equation}
We observe that there are many different $v$ satisfying
the Riccati equation~$v'+v^2 = q$ and thus allowing the above
factorization of~$A$; we fix one such a $v$ in what follows and notice that the function $v-r$ is continuous.

For $\la\ne0$ we can recast the spectral
problem~\eqref{eq:intr.spr} as a first order system for the
functions~$u_2:=y$ and~$u_1:=(y'-vy)/\la$, namely,
\begin{align}\label{eq:Dir.system1}
    u_2' - v u_2 &= \la u_1,\\
    -u_1'-v u_1 + 2 p u_2 &= \la u_2. \label{eq:Dir.system2}
\end{align}
Setting
\begin{equation}\label{eq:Dir.P}
    J:=\left(
           \begin{array}{cc}
             0 & 1 \\
             -1 & 0 \\
           \end{array}
         \right),
\qquad
    P:=\left(
                \begin{array}{cc}
                  0 & -v \\
                  -v & 2p \\
                \end{array}
              \right),
\qquad
    \bu=\binom{u_1}{u_2},
\end{equation}
we see that the above system is the spectral problem
$\sD(P)\bu=\la\bu$ for the Dirac operator $\sD(P)$
acting in $L_2(0,1)\times L_2(0,1)$ via
\begin{equation}
\label{eq:dif.exp.Dirac}
    \sD(P)\bu =J\frac{d\bu}{dx}+P\bu =: \ell(P)\bu
\end{equation}
on the domain
\[
    \dom\sD(P):=\{\bu=(u_1,u_2)^\mathrm{t}\in W_2^1(0,1)\times W_2^1(0,1)
        \mid u_2(0)=u_2(1)=0\}.
\]

\begin{lemma}\label{lem:Dir.spectra}
The nonzero spectra of the Dirac operator~$\sD(P)$ and the
operator pencil~$T_{p,q}$ coincide.
\end{lemma}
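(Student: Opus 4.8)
The plan is to show that $\la\neq 0$ is an eigenvalue of $T_{p,q}$ if and only if it is an eigenvalue of $\sD(P)$, by checking that the passage between $y$ and $\bu=(u_1,u_2)^{\mathrm t}$ described just before the lemma is a genuine bijection between the corresponding eigenspaces. Since the excerpt has already recorded that $\sigma(T_{p,q})$ consists entirely of eigenvalues and is discrete (from \cite{Pro:2011a}), and the same is classically true for Dirac operators of the form $\sD(P)$, it suffices to match eigenvalues together with eigenfunctions; there is no continuous spectrum to worry about, and one need not track multiplicities beyond noting that under assumption~(A) everything is simple.

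First I would take $\la\neq 0$ and assume $T_{p,q}(\la)y=0$ for some nonzero $y\in\dom T_{p,q}$, i.e. $\ell(y)+2\la p y=\la^2 y$ with $y(0)=y(1)=0$. Set $u_2:=y$ and $u_1:=(y'-vy)/\la=(y^{[1]}+(r-v)y)/\la$; since $y^{[1]}\in AC[0,1]$ and $v-r$ is continuous (as noted after \eqref{eq:Dir.Afact}), both components lie in $W_2^1(0,1)$. The first equation $u_2'-vu_2=\la u_1$ holds by the very definition of $u_1$, and a direct computation using $Ay=-(\tfrac{d}{dx}+v)(\tfrac{d}{dx}-v)y$ from the factorization \eqref{eq:Dir.Afact} turns the Sturm--Liouville equation into the second equation $-u_1'-vu_1+2pu_2=\la u_2$; together these say $\sD(P)\bu=\la\bu$. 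The boundary conditions $u_2(0)=u_2(1)=0$ are inherited from $y(0)=y(1)=0$, so $\bu\in\dom\sD(P)$, and $\bu\neq 0$ because $u_2=y\not\equiv 0$. Hence $\la\in\sigma(\sD(P))$.

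Conversely, suppose $\la\neq 0$ and $\sD(P)\bu=\la\bu$ with $0\neq\bu=(u_1,u_2)^{\mathrm t}\in\dom\sD(P)$. Put $y:=u_2$; then $y\in W_2^1(0,1)$, $y(0)=y(1)=0$, and the first Dirac equation gives $u_1=(y'-vy)/\la$, so $u_1$ is determined by $y$. Substituting this into the second equation and reversing the computation above yields $\ell(y)+2\la p y=\la^2 y$ in the distributional sense, with $y^{[1]}=y'-ry\in AC[0,1]$, so $y\in\dom\ell$ and in fact $y\in\dom T_{p,q}$. One must check $y\not\equiv 0$: if $u_2\equiv 0$ then the first equation forces $\la u_1=0$, hence $u_1\equiv 0$ since $\la\neq 0$, contradicting $\bu\neq 0$; so $y\neq 0$ and $T_{p,q}(\la)y=0$, i.e. $\la\in\sigma(T_{p,q})$. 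The two implications give equality of the nonzero spectra.

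The only mildly delicate point — the ``main obstacle'' such as it is — is the regularity bookkeeping: verifying that the quasi-derivative formulation of \eqref{eq:intr.spr} matches the first-order system entry by entry in the distributional (Savchuk--Shkalikov) sense, and that the Miura substitution $q=v'+v^2$ together with the factorization \eqref{eq:Dir.Afact} genuinely produces the off-diagonal potential $-v$ in $P$ with the correct signs, so that no spurious boundary terms or domain mismatches appear. This is exactly the content of the displayed system \eqref{eq:Dir.system1}--\eqref{eq:Dir.system2} and is routine once one writes $y'=y^{[1]}+ry$ and uses continuity of $v-r$; I would present it as a short verification rather than a separate computation.
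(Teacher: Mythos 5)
Your proposal is correct and follows essentially the same route as the paper: both directions are handled by the explicit correspondence $u_2=y$, $u_1=(y'-vy)/\la$ coming from the system \eqref{eq:Dir.system1}--\eqref{eq:Dir.system2} and the factorization \eqref{eq:Dir.Afact}, with the spectra being purely discrete point spectra so that matching eigenfunctions suffices. In fact you spell out two points the paper leaves implicit (why $u_2\not\equiv 0$ forces nontriviality in the converse direction, and the $W_2^1$/quasi-derivative regularity bookkeeping), so the write-up is, if anything, slightly more detailed than the original.
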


\begin{proof}
We start by observing that the Dirac operator~$\sD(P)$ is
self-adjoint in the Hilbert space~$L_2(0,1)\times L_2(0,1)$ and
has a simple discrete spectrum~\cite{LevSar:1991}. The above arguments show that
every eigenvalue $\la$ of~$T_{p,q}$ is an eigenvalue of
$\sD(P)$ as well.

Conversely, assume that $\la\ne0$ is an eigenvalue
of~$\sD(P)$ and $\bu=(u_1,u_2)^\mathrm{t}$ is a
corresponding eigenfunction. Determining $u_1$ via $u_2$
from~\eqref{eq:Dir.system1} and substituting
in~\eqref{eq:Dir.system2}, we find that $y:=u_2$ satisfies the
equality
\[
    -\frac1\la\Bigl(\frac{d}{dx}+v\Bigr)\Bigl(\frac{d}{dx}-v\Bigr)y
        - 2 p y = \la y,
\]
which in view of~\eqref{eq:Dir.Afact} yields~$T_{p,q}y=0$.
Clearly, $y$ is non-trivial and satisfies the Dirichlet boundary
conditions; henceforth it is an eigenfunction of~$T_{p,q}$
corresponding to the eigenvalue~$\la$.
\end{proof}

\begin{remark}\label{rem:Dir.0}
A straightforward analysis
of~\eqref{eq:Dir.system1}--\eqref{eq:Dir.system2} shows that
$\lambda=0$ is an eigenvalue of the Dirac
operator~$\sD(P)$, the corresponding eigenfunction being
$\bu=(u_1,u_2)^\mathrm{t}$ with $u_1 = \exp(-\int v)$ and $u_2\equiv0$.
However, under assumption~(A) the number $\lambda=0$ is never an
eigenvalue of~$T_{p,q}$; therefore,
\[
    \sigma\bigl(\sD(P)\bigr) = \sigma(T_{p,q}) \cup \{0\}.
\]
\end{remark}

The norming constant for the Dirac operator~$\sD(P)$
corresponding to an eigenvalue~$\la$ is defined as
\[
    \|\bu\|^2= \|u_1\|^2_{L_2} + \|u_2\|^2_{L_2},
\]
where~$\bu=(u_1,u_2)^\mathrm{t}$ is the eigenfunction for~$\la$
normalized by the initial conditions $u_1(0)=1$ and $u_2(0)=0$.
Assume that $\la\ne0$; then, as shown in the proof of
Lemma~\ref{lem:Dir.spectra}, $y:=u_2$ is an eigenfunction
of~$T_{p,q}$ corresponding to the eigenvalue~$\la$; note also that
$y$ is real valued and satisfies the initial conditions $y(0)=0$ and $y^{[1]}(0)=(y'-ry)(0)=(y'-vy)(0)=\la u_1(0)=\la$. Integration by parts on account of~\eqref{eq:Dir.Afact} gives
\[
    \|u'_2-vu_2\|^2_{L_2} = (Au_2,u_2)_{L_2};
\]
using now~\eqref{eq:Dir.system1}, we conclude that
\[
    \|\bu\|^2 = \frac1{\la^2}(Ay,y) + (y,y) = -\frac2\la(By,y) + 2(y,y),
\]
which coincides with~\eqref{eq:pre.al}. We have thus established
the following important result.

\begin{lemma}\label{lem:Dir.normconst}
The norming constants corresponding to nonzero eigenvalues of the
Dirac operator~$\sD(P)$ and the operator pencil~$T_{p,q}$
coincide.
\end{lemma}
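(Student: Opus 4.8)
The plan is to show that the quantity $\|\bu\|^2$ appearing in the definition of the Dirac norming constant equals the expression~\eqref{eq:pre.al} defining $\al_n$, which amounts to a short integration-by-parts computation building on the factorization~\eqref{eq:Dir.Afact} and the first-order system~\eqref{eq:Dir.system1}--\eqref{eq:Dir.system2}. The essential observations are already laid out in the text preceding the statement: for a nonzero eigenvalue $\la$ of $\sD(P)$ with normalized eigenfunction $\bu=(u_1,u_2)^{\mathrm t}$, the second component $y:=u_2$ is, by the proof of Lemma~\ref{lem:Dir.spectra}, an eigenfunction of $T_{p,q}$, and it satisfies the very initial conditions $y(0)=0$, $y^{[1]}(0)=\la$ that enter the definition of $\al_n$. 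So the two eigenfunctions are literally the same function, and it remains only to match the two norming-constant formulas.

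First I would record that $u'_2 - v u_2 = \la u_1$ from~\eqref{eq:Dir.system1}, so that $\|u_1\|_{L_2}^2 = \la^{-2}\|u'_2 - v u_2\|_{L_2}^2$. Next, using the factorization $Ay = -(\tfrac{d}{dx}+v)(\tfrac{d}{dx}-v)y$ and integrating by parts, I would verify that $\|u'_2 - v u_2\|_{L_2}^2 = (Au_2,u_2)_{L_2}$; the boundary terms drop out because $u_2(0)=u_2(1)=0$. Combining these gives
\[
    \|\bu\|^2 = \|u_1\|_{L_2}^2 + \|u_2\|_{L_2}^2 = \frac{1}{\la^2}(Ay,y) + (y,y).
\]
Then I would invoke the eigenvalue relation $T_{p,q}(\la)y = \la^2 y - 2\la By - Ay = 0$, i.e.\ $(Ay,y) = \la^2(y,y) - 2\la(By,y)$, to rewrite the first term and obtain $\|\bu\|^2 = (y,y) - \tfrac{2}{\la}(By,y) + (y,y) = 2(y,y) - \tfrac{2}{\la}(By,y)$, which is exactly~\eqref{eq:pre.al} since $B$ is multiplication by $p$. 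Finally, since the correspondence $\bu \leftrightarrow y$ between normalized eigenfunctions of $\sD(P)$ and of $T_{p,q}$ is a bijection on nonzero eigenvalues (Lemma~\ref{lem:Dir.spectra} together with Remark~\ref{rem:Dir.0}), this identifies the two families of norming constants for all nonzero $\la$.

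I do not anticipate a genuine obstacle here — the statement is essentially a bookkeeping corollary of the computations already carried out in the discussion immediately above Lemma~\ref{lem:Dir.normconst}, and indeed the text all but writes out the proof. The only point that requires a modicum of care is the integration by parts in $\|u'_2 - v u_2\|_{L_2}^2 = (Au_2,u_2)_{L_2}$: one should note that $u_2 = y \in \dom A$ (its quasi-derivative is absolutely continuous and it satisfies the Dirichlet conditions), and that $u'_2 - v u_2$ differs from the quasi-derivative $y^{[1]} = y' - ry$ only by the continuous factor $v - r$ times $y$, so all the quantities involved are honestly in $L_2$ and the endpoint contributions vanish. With that checked, the chain of equalities above closes and the proof is complete.
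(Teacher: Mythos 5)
Your proposal is correct and follows essentially the same route as the paper: the paper's own proof is precisely the computation laid out in the text just above the lemma, namely identifying $y:=u_2$ as the normalized eigenfunction of $T_{p,q}$, using~\eqref{eq:Dir.system1} and integration by parts via the factorization~\eqref{eq:Dir.Afact} to get $\|\bu\|^2=\tfrac1{\la^2}(Ay,y)+(y,y)$, and then invoking the pencil eigenvalue equation to match~\eqref{eq:pre.al}. Your additional remarks on $u_2\in\dom A$ and on $y^{[1]}(0)=(y'-vy)(0)=\la$ are exactly the bookkeeping the paper performs as well.
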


The above lemma suggests that we can use the spectral data $\sd(T_{p,q})$ of the
operator pencil~$T_{p,q}$ in order to find the related Dirac operator~$\sD(P)$. Having
determined the potential~$P=(p_{ij})_{i,j=1}^2$ of $\sD(P)$, we then
identify the potentials~$p$ and $q$ of the operator pencil~$T_{p,q}$
as $p:=p_{22}/2$ and $q: = -p'_{12}+p_{12}^2$. We note, however,
that since the factorization~\eqref{eq:Dir.Afact} is not unique,
there are many Dirac operators~$\sD(P)$ associated with $T_{p,q}$.
Therefore the spectral data~$\sd(T_{p,q})$ cannot determine such an
operator~$\sD(P)$ uniquely.

The reason for this non-uniqueness is quite clear from
Remark~\ref{rem:Dir.0} and Lemma~\ref{lem:Dir.normconst}; indeed,
the spectral data for $T_{p,q}$ leave the norming constant
$\alpha_0$ for the eigenvalue $\lambda_0:=0$ of $\sD(P)$
undetermined. It is this freedom in the choice of $\alpha_0$ that
leads to non-uniqueness of potentials~$P$ for the associated Dirac
operators~$\sD(P)$. However, we shall show in Section~\ref{sec:Dir-rec-uniq} that all such Dirac operators determine the same pencil~$T_{p,q}$.


\section{Transformation operators}\label{sec:transf}


We shall use the relation between the operator pencil~$T_{p,q}$ and the Dirac operator~$\sD(P)$ of~\eqref{eq:Dir.P}--\eqref{eq:dif.exp.Dirac} explained in the previous section and the well-developed inverse spectral theory for Dirac operators to reconstruct~$\sD(P)$ from the given spectral data. Once such a Dirac operator has been found, it is then straightforward to determine the corresponding potentials~$p$ and~$q$ of the pencil~$T_{p,q}$.

However, the classical inverse spectral theory reconstructs a Dirac operator with potential in the AKNS form or in other canonical form, see Section~\ref{sec:Dir-rec}. Therefore we then have to transform such a canonical Dirac operator to the one of the form~\eqref{eq:Dir.P} keeping the spectral data unchanged. This is done by means of the so-called transformation operators, which we study in this section.

More exactly, assume that $P$ and $Q$ are $2\times 2$ matrix-valued potentials in $L_2((0,1),\mathcal{M}_2)$ and set
\[
    \cD_0:=\{(u_1,u_2)^{\mathrm{t}}\in W_2^1(0,1)\times W_2^1(0,1)
    \mid u_2(0)=0\}.
\]
We need a \emph{transformation operator}  $\sX=\sX(P,Q)$ between
the Dirac operators~$\ell(P)$ and $\ell(Q)$ acting on the set~$\cD_0$,
i.e., for an operator satisfying the
relation~$\sX\ell(P)\bu=\ell(Q)\sX\bu$ for all $\bu\in\cD_0$.
Similar transformation operators for $P$ and $Q$ with Lipschitz
continuous entries were constructed
in~\cite{LevSar:1991,CoxKno:1996}; it is thus reasonable to look for the transformation operator~$\sX$ of a similar form
\begin{equation}
\label{eq:Tr.op} \sX\bu(x)=R(x)\bu(x)+\int_0^xK(x,s)\bu(s)ds,
\end{equation}
where $R$ and $K$ are $2\times 2$ matrix-valued functions of one and two variables respectively. Keeping in mind that the Dirac operators of interest, $\ell(P)$ and $\ell(Q)$, are considered on functions satisfying the same initial conditions, we impose the restriction $R(0)=I$ guaranteeing that $\sX$ preserves the values of functions at $x=0$. Under such a normalization, $R$ will explicitly be given as
\begin{equation}
    \label{eq:Dir.R}
    R(x)=e^{\theta_1(x)}\left(%
    \begin{array}{cc}
    \cos\theta_2(x) & \sin\theta_2(x) \\
     -\sin\theta_2(x) & \cos\theta_2(x) \\
    \end{array}%
    \right) = e^{\theta_1(x)I+\theta_2(x)J},
\end{equation}
with
\begin{equation}\label{eq:Dir.theta}
   \begin{aligned}
        \theta_1(x)=\frac{1}{2}\int_0^x\mathrm{tr}[J(Q(s)-P(s))]\,ds,\\
        \theta_2(x)=\frac{1}{2}\int_0^x\mathrm{tr}(Q(s)-P(s))ds,
    \end{aligned}
\end{equation}
cf.~\cite{CoxKno:1996}. More exactly, the following analogue of
Theorem~3.1 of~\cite{CoxKno:1996} holds true.

\begin{theorem}\label{thm:Dir.tr-op}
Assume that $P$ and $Q$ are in $L_2((0,1),\mathcal{M}_2)$. Then an  operator~$\sX(P,Q)$ of the form~\eqref{eq:Tr.op}, with~$R$ obeying the condition~$R(0)=I$ and with a summable kernel~$K$, is a
transformation operator for~$\ell(P)$ and~$\ell(Q)$ on the set~$\cD_0$ if and only if the matrix-valued function~$R$ is given by~\eqref{eq:Dir.R}--\eqref{eq:Dir.theta} and the
kernel~$K=(K_{ij})_{i,j=1}^2$ is a mild solution of the partial differential equation
\begin{equation}
\label{eq:Dir.K}
    J\partial_xK(x,y)+\partial_yK(x,y)J=K(x,y)P(y)-Q(x)K(x,y)
\end{equation}
in the domain~$\Omega\equiv\{(x,y)\mid 0<y<x<1\}$ satisfying for~$0\leq x\leq1$ the boundary conditions
\begin{align}\label{eq:K_b.c_1}
    K(x,x)J-JK(x,x)&=JR'(x)+Q(x)R(x)-R(x)P(x),\\
    K_{12}(x,0)&=K_{22}(x,0)=0.\label{eq:K_b.c_2}
\end{align}
\end{theorem}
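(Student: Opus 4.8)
The plan is to prove both implications by the standard operator-identity calculus for first-order systems, mimicking the scheme of Theorem~3.1 of~\cite{CoxKno:1996} but tracking the $L_2$ (rather than Lipschitz) regularity of the potentials. First I would establish necessity. Assume $\sX=\sX(P,Q)$ of the form~\eqref{eq:Tr.op} is a transformation operator, so $\sX\ell(P)\bu=\ell(Q)\sX\bu$ for every $\bu\in\cD_0$. I would plug the representation~\eqref{eq:Tr.op} into both sides, using $\ell(P)\bu=J\bu'+P\bu$, and integrate by parts in the integral term to move the derivative off $\bu$; the boundary terms at $s=0$ and $s=x$ produce, after collecting, an identity of the form
\[
    \bigl(JR'+QR-RP\bigr)\bu(x) + \bigl(R(x)J+K(x,x)J-JK(x,x)\bigr)\bu(x) + \int_0^x\bigl(J\partial_x K+\partial_y K\,J - KP + QK\bigr)\bu\,ds + \bigl[\text{terms at }s=0\bigr]=0
\]
valid for all $\bu\in\cD_0$. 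Since $\bu(x)$, $\bu'(x)$ (equivalently $\bu(x)$ and its value is free subject only to $u_2(0)=0$) and the test-function values on $(0,x)$ can be varied independently, each bracket must vanish. The pointwise coefficient of $\bu'$ under the integral sign forces the PDE~\eqref{eq:Dir.K}; the coefficient of $\bu(x)$, once the $R(x)J$ piece coming from the leading symbol is separated out and matched against $\ell(Q)(R\bu)$, yields the diagonal boundary relation~\eqref{eq:K_b.c_1}; the $s=0$ boundary contribution is $-K(x,0)J\bu(0)$, and since only $u_1(0)$ is free (because $u_2(0)=0$ on $\cD_0$) this gives exactly $K_{12}(x,0)=K_{22}(x,0)=0$, i.e.~\eqref{eq:K_b.c_2}. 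Finally, separating the differential-operator part $\ell(Q)$ acting on $R(x)\bu(x)$ forces $JR'=QR-RP$ up to a term commuting appropriately; writing $R=e^{\theta_1 I+\theta_2 J}$ and using $J^2=-I$, that ODE reduces to the scalar system $\theta_1'=\tfrac12\tr[J(Q-P)]$, $\theta_2'=\tfrac12\tr(Q-P)$ with $R(0)=I$, which integrates to~\eqref{eq:Dir.R}--\eqref{eq:Dir.theta}. I would do this last ODE computation by taking traces of $JR'R^{-1}$ and of $R'R^{-1}$, exploiting $\tr J=0$ and $\tr(JM)$ versus $\tr M$.

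For sufficiency I would run the computation in reverse: given $R$ of the stated form and a mild solution $K$ of~\eqref{eq:Dir.K}--\eqref{eq:K_b.c_2}, define $\sX$ by~\eqref{eq:Tr.op} and verify by the same integration by parts that $\sX\ell(P)\bu-\ell(Q)\sX\bu=0$ for $\bu\in\cD_0$; every boundary and bulk term that appears has been arranged to cancel by~\eqref{eq:K_b.c_1}, \eqref{eq:K_b.c_2}, the PDE, and the ODE satisfied by $R$. Here one must be careful that the manipulations are legitimate for $K$ only a \emph{mild} (not classical) solution and for $\bu\in W_2^1$; the cleanest route is to integrate the PDE along characteristics first, so that the integration by parts is applied to absolutely continuous functions, and then pass to the general case by density of smooth $\bu$ in $\cD_0$ and continuity of both sides in the $W_2^1$ norm (using $P,Q\in L_2$ and $K$ summable so that $\sX$ is bounded from $W_2^1$ into $L_2$, or into $W_2^1$ after the identity is known).

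I would also need the existence half implicitly: that~\eqref{eq:Dir.K} with boundary data~\eqref{eq:K_b.c_1}--\eqref{eq:K_b.c_2} does admit a summable mild solution. This is a Goursat-type problem for a hyperbolic first-order system; after diagonalising the principal part $J\partial_x+\partial_y J$ via the eigenvectors of $J$ (which introduces the characteristic variables $x\pm y$ up to the rotation by $\theta_2$), it converts to a system of Volterra integral equations along characteristics, solvable by successive approximations, with the $L_2$ data producing a kernel $K$ whose entries lie in $L_2$ on horizontal and vertical sections of $\Omega$ — exactly the ``summable kernel'' class in the statement. I expect the main obstacle to be precisely this regularity bookkeeping: making the integration-by-parts identity rigorous when $P,Q$ are merely $L_2$ and $K$ is merely a mild solution, and checking that the boundary condition~\eqref{eq:K_b.c_1} — which involves $K(x,x)$, a trace of $K$ on the diagonal — is meaningful and correctly captured in the mild formulation. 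The algebra ($J^2=-I$, trace identities, the exponential form of $R$) is routine; the functional-analytic care in passing between the strong, mild, and characteristic-integrated forms of~\eqref{eq:Dir.K} is where the real work lies, and I would handle it by first proving everything for Lipschitz $P,Q$ as in~\cite{CoxKno:1996} and then using an approximation argument in $L_2((0,1),\mathcal{M}_2)$, with uniform bounds on the Volterra iteration guaranteeing convergence of the kernels.
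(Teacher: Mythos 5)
Your plan is essentially the paper's own proof: substitute \eqref{eq:Tr.op} into $\sX\ell(P)\bu=\ell(Q)\sX\bu$, integrate by parts in the distributional sense, equate the coefficients of $\bu'(x)$, of $\bu(x)$, of $\bu(0)$ and the integrands to obtain \eqref{eq:Dir.K}--\eqref{eq:K_b.c_2}, and exploit the vanishing traces of the commutator terms to reduce \eqref{eq:K_b.c_1} to scalar ODEs for the entries of $R$ that integrate, with $R(0)=I$, to \eqref{eq:Dir.R}--\eqref{eq:Dir.theta}, sufficiency being the same computation run backwards. Two minor corrections: it is the coefficient of $\bu'(x)$ (not anything ``under the integral sign'') that forces $JR=RJ$ and hence the form $R=aI+bJ$ needed before the trace argument, and the relation forced by the coefficient of $\bu(x)$ is \eqref{eq:K_b.c_1} with the nonzero commutator $K(x,x)J-JK(x,x)$ on one side rather than an ODE $JR'=QR-RP$; also, the existence of the kernel $K$ that you sketch at the end is not part of this equivalence but is the separate Theorem~\ref{thm:K.exist}.
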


The proof of this theorem follows in general that of Theorem~3.1
of~\cite{CoxKno:1996}. One essential difference is that since the
matrix-valued functions~$P$ and $Q$ are less regular, $K$ need not
be differentiable in the usual sense. Therefore differentiation
of~$K$ should be understood in the distributional sense; that is
why $K$ is only required to be a mild solution
of~\eqref{eq:Dir.K}. For the sake of completeness, we justify the
steps involving differentiation in the proof below.

\begin{proof}
Firstly, we observe that, for an integrable $K$ and for $\bu\in\cD_0$, the standard formula
\[
    \frac{d}{dx}\int_0^x K(x,s)\bu(s)ds
        =K(x,x)\bu(x)+\int_0^x \partial_x K(x,s)\bu(s)ds
\]
remain to hold in the sense of distributions, so that
\begin{equation}\label{eq:XD}
    \begin{aligned}
        \ell(Q)\sX\bu(x)
            &=JR(x)\bu'(x)+\{JR'(x)+Q(x)R(x)\}\bu(x)\\
            &\ +J\frac{d}{dx}\int_0^x K(x,s)\bu(s)ds +
                \int_0^xQ(x)K(x,s)\bu(s)ds\\
            &=JR(x)\bu'(x)+\{JR'(x)+Q(x)R(x)+JK(x,x)\}\bu(x)\\
            &\ +\int_0^x\{J\partial_xK(x,s)+Q(x)K(x,s)\}\bu(s)ds.
    \end{aligned}
\end{equation}
Similarly, the integration by parts formula
\[
    \int_0^x K(x,s)\bu'(s)\,ds
        =K(x,x)\bu(x)-K(x,0)\bu(0) -\int_0^x \partial_sK(x,s)\bu(s)\,ds
\]
holds in the distributional sense, and in the same sense one gets the equalities
\begin{equation}\label{eq:DX}
\begin{aligned}
    \sX\ell(P)\bu(x)
        &= R(x)J\bu'(x) + R(x)P(x)\bu(x)\\
        &\ +\int_0^x K(x,s)\{J\bu'(s)+P(s)\bu(s)\}ds\\
        &=R(x)J\bu'(x)+\{R(x)P(x)+K(x,x)J\}\bu(x)\\
        &\ -K(x,0)J\bu(0)+\int_0^x\{K(x,s)P(s)
                -\partial_sK(x,s)J\}\bu(s)ds.
\end{aligned}
\end{equation}

If~$R$ is given by~\eqref{eq:Dir.R} and~$K$
satisfies~\eqref{eq:K_b.c_2}, then $JR=RJ$ and $K(x,0)J\bu(0)=0$ for $\bu\in\cD_0$; using now~\eqref{eq:Dir.K} and \eqref{eq:K_b.c_1} in equations \eqref{eq:XD} and \eqref{eq:DX}, we arrive at the equality
\[
    \ell(Q)\sX\bu=\sX\ell(P)\bu
\]
on the domain~$\cD_0$, thus establishing the sufficiency part.

To prove necessity, we equate the integrands and the coefficients of~$\bu$ and $\bu'$ in~\eqref{eq:XD} and \eqref{eq:DX}. The coefficients of~$\bu'$ yield the relation~$JR=RJ$ and so
\[
    R(x)=\left(%
    \begin{array}{cc}
    a(x) & b(x) \\
    -b(x) & a(x) \\
    \end{array}%
    \right).
\]
Now equate the coefficients of~$\bu$ to get the relation
\begin{equation}
\label{eq:PQK}
    JR'(x)+Q(x)R(x)-R(x)P(x)=K(x,x)J-JK(x,x)
\end{equation}
coinciding with~\eqref{eq:K_b.c_1}.
Using the fact that~$KJ-JK$ and~$JKJ+K$ have zero traces, we derive
from~\eqref{eq:PQK} the following system for~$a$ and~$b$:
\begin{equation}
    \label{eq:system_a_b}
    2\frac{d}{dx}\left(%
    \begin{array}{c}
      a \\
      b \\
    \end{array}%
    \right)=\left(%
    \begin{array}{cc}
      \tr J(Q-P) & -\tr (Q-P) \\
      \tr (Q-P) & \tr J(Q-P) \\
    \end{array}%
    \right)\left(%
    \begin{array}{c}
      a \\
      b \\
    \end{array}%
    \right).
\end{equation}
Multiplying the first row by~$a$ and the second by~$b$ and adding
gives
\[
    (a^2+b^2)'=(a^2+b^2)\tr J(Q-P),
\]
so that~$a^2+b^2=c\exp(2\theta_1)$ with $\theta_1$ of~\eqref{eq:Dir.theta} and some constant~$c$. Recalling the assumption $R(0)=I$, we conclude that $c=1$.

Now, upon substituting~$a=\exp(\theta_1)\cos\eta$
and~$b=\exp(\theta_1)\sin\eta$ in the system~\eqref{eq:system_a_b}, we find that~$\eta=\theta_2+c_1$ with a constant~$c_1$. Using again the normalization~$R(0)=I$, we obtain~$\eta=\theta_2+2\pi n, n\in \bZ$. Thus the matrix~$R$ is indeed given by~\eqref{eq:Dir.R}.

Next, the equation for the kernel~$K$ follows from the equality of
the integrands in~\eqref{eq:XD} and~\eqref{eq:DX}. Finally, as the
term~$K(x,0)J\bu(0)$ must vanish for all $\bu\in\cD_0$,
relation~\eqref{eq:K_b.c_2} follows. The proof is complete.
\end{proof}


The above theorem reduces the question on existence of the transformation operator to that on solvability of the system~\eqref{eq:Dir.K}--\eqref{eq:K_b.c_2}. Existence of mild solutions to that system is discussed in the next theorem.

\begin{theorem}
\label{thm:K.exist}
Assume that matrix-valued functions $P$ and $Q$ are in~$L_2((0,1),\mathcal{M}_2)$. Then the system~\eqref{eq:Dir.K}--\eqref{eq:K_b.c_2} has a unique solution in the sense of distributions; moreover, this solution belongs
to~$L_2(\Omega,\mathcal{M}_2)$.
\end{theorem}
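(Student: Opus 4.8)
The plan is to reduce the Goursat-type problem \eqref{eq:Dir.K}--\eqref{eq:K_b.c_2} to an equivalent system of Volterra integral equations and to solve the latter by successive approximations in $L_2(\Omega,\mathcal{M}_2)$. First I would pass to the characteristic variables of the principal part $J\partial_x K+\partial_y K\,J$. Decomposing every matrix into the sum of its part commuting with $J$ (spanned by $I$ and $J$) and its part anticommuting with $J$, one sees that on the commuting summand $K^{+}$ the principal part acts as $J(\partial_x+\partial_y)$ and on the anticommuting summand $K^{-}$ as $J(\partial_x-\partial_y)$; thus \eqref{eq:Dir.K} splits into two equations, one for $K^{+}$ with characteristics $x-y=\mathrm{const}$ and one for $K^{-}$ with characteristics $x+y=\mathrm{const}$, coupled only through the zeroth-order terms $KP-QK$. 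The diagonal $y=x$ is characteristic for $K^{+}$ but not for $K^{-}$, and the left-hand side $K(x,x)J-JK(x,x)$ always anticommutes with $J$; a direct computation shows that the choice \eqref{eq:Dir.theta} of $\theta_1,\theta_2$ is exactly what makes the right-hand side of \eqref{eq:K_b.c_1} anticommute with $J$ as well, so that \eqref{eq:K_b.c_1} is consistent and determines precisely $K^{-}(x,x)$. The value $K^{+}(x,0)$ is in turn a fixed linear function of $K^{-}(x,0)$ by virtue of \eqref{eq:K_b.c_2}.

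Integrating the rewritten equation \eqref{eq:Dir.K} along the two characteristic families and inserting the boundary relations just described turns the problem into a closed system of integral equations of the schematic form $K=G+\mathcal{V}[K]$ on $\Omega$. Here $G$ is assembled from the boundary data, hence from $R$ and $R'$; since $R$ is bounded and $R'=\tfrac12\bigl(\tr[J(Q-P)]\,I+\tr(Q-P)\,J\bigr)R$ lies in $L_2(0,1)$ by the hypothesis on $P,Q$, a change of variables shows $G\in L_2(\Omega,\mathcal{M}_2)$. The operator $\mathcal{V}$ is of Volterra type with $L_2$ coefficients $P$ and $Q$: the value $(\mathcal{V}[K])(x,y)$ involves integrals of $K(t,\cdot)$ against $P(t)$ and $Q(t)$ only over values $t$ that do not exceed $x$ (the integration along $x+y=\mathrm{const}$ runs from the diagonal point, that along $x-y=\mathrm{const}$ from $y=0$), which is precisely the structure that drives the iteration.

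Next I would solve $K=G+\mathcal{V}[K]$ by the Neumann series $K=\sum_{n\ge0}\mathcal{V}^{n}[G]$. The Volterra structure in the first variable, combined with the Cauchy--Schwarz inequality applied to each of the $n$ integrations against $|P|$ or $|Q|$, yields the standard bound $\|\mathcal{V}^{n}F\|_{L_2(\Omega)}\le C^{n}(n!)^{-1/2}\|F\|_{L_2(\Omega)}$ with $C=C(\|P\|_{L_2}+\|Q\|_{L_2})$; hence the series converges in $L_2(\Omega,\mathcal{M}_2)$ and its sum $K$ belongs to $L_2(\Omega,\mathcal{M}_2)$. Reversing the construction, while keeping track of the fact that for $K\in L_2(\Omega)$ the formula for differentiating $\int_0^x K(x,s)\,ds$ and the integration-by-parts formula hold in the sense of distributions (exactly as in the proof of Theorem~\ref{thm:Dir.tr-op}), one checks that this $K$ is a mild solution of \eqref{eq:Dir.K}--\eqref{eq:K_b.c_2}. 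Uniqueness comes from the same machinery: any distributional $L_2$-solution satisfies the integral equation $K=G+\mathcal{V}[K]$, so the difference of two solutions obeys $K=\mathcal{V}^{n}[K]$ for every $n$, and the norm bound above forces it to vanish.

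I expect the main obstacle to be the low regularity of $P$ and $Q$. In the Lipschitz (or merely bounded) setting of~\cite{LevSar:1991,CoxKno:1996} the kernel $K$ is continuous and one iterates in the sup-norm; here $K$ is only in $L_2$, its derivatives exist only distributionally, and the iteration must be performed in $L_2(\Omega)$. The technical heart is therefore the estimate $\|\mathcal{V}^{n}\|_{L_2(\Omega)\to L_2(\Omega)}\le C^{n}(n!)^{-1/2}$ for the iterated Volterra operator with $L_2$ coefficients, together with the careful verification that every differentiation and integration-by-parts step relating \eqref{eq:Dir.K}--\eqref{eq:K_b.c_2} to the integral system is legitimate in the distributional sense; the consistency of the diagonal condition \eqref{eq:K_b.c_1} noted above is a secondary point that must also be recorded.
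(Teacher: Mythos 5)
Your proposal is correct and follows essentially the same route as the paper: your decomposition of $K$ into the parts commuting and anticommuting with $J$ is exactly the paper's passage to the variables $L_1,\dots,L_4$ (the $\{I,J\}$- and anticommuting components), after which both arguments integrate along the two characteristic families, use \eqref{eq:K_b.c_1} to prescribe the anticommuting part on the diagonal and \eqref{eq:K_b.c_2} to express the commuting part at $y=0$ through the anticommuting one, and solve the resulting Volterra-type integral system by successive approximations in $L_2(\Omega)$, with uniqueness from the same iteration estimates. The only difference is cosmetic notation; your explicit $\|\mathcal{V}^n\|\le C^n(n!)^{-1/2}$ bound is the standard quantitative form of the convergence the paper invokes by reference.
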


\begin{proof}
The proof of this theorem is rather standard and uses reduction to
the equivalent system of integral equations. We only sketch the
main idea and refer the reader to the paper~\cite{Yam:1988} where
the missing details can be found.

Introduce a four-component vector-function~$L=(L_1,L_2,L_3,L_4)^{\mathrm{t}}$ via
\begin{align*}
    L_1&=K_{21}-K_{12}, \quad L_2=K_{22}-K_{11},\\
    L_3&=K_{11}+K_{22}, \quad L_4=K_{12}+K_{21}.
\end{align*}
In terms of this vector the system~\eqref{eq:Dir.K} takes the form
\begin{equation} \label{system_L}
    (\partial_x+E\partial_y)L(x,y)=F(x,y)L(x,y),
\end{equation}
where~$E=\operatorname{diag}(1,-1,1,-1)$ and~$F(x,y)$ is a $4\times 4$ matrix-function whose entries are linear combinations of the entries of ~$P(y)$ and~$Q(x)$. The boundary conditions~\eqref{eq:K_b.c_1} and \eqref{eq:K_b.c_2} for~$K$ translate into the relations
\begin{equation}
\label{L_b.c.}
\begin{aligned}
L_k(x,x)&=g_k(x), \quad k=2,4,\\
L_1(x,0)&=L_4(x,0)\\
L_3(x,0)&=-L_2(x,0),
\end{aligned}
\end{equation}
where~$g_2$ and~$g_4$ are respectively the $(1,2)$- and $(1,1)$-entries
of the matrix~$R(x)P(x)-Q(x)R(x)-JR'(x)$. Under the assumptions of the theorem, $g_2$ and $g_4$ belong to $L_2(0,1)$.

We next denote by~$F_i$, $i=\overline{1,4}$, the~$i$-th row of the
matrix~$F$ and rewrite the system~\eqref{system_L}--\eqref{L_b.c.} as a system of the integral equations
\begin{align*}
L_i     &=\int_y^{\frac{x+y}{2}}\!F_i(-s+x+y,s)L(-s+x+y,s)ds
            +g_i\Bigl(\frac{x+y}{2}\Bigr), \; i=2 ,4\\
L_1     &=\int_0^y\!F_1(s+x-y,s)L(s+x-y,s)ds\\
        &\quad +\int_0^{\frac{x-y}{2}}\!F_4(-s+x-y,s)L(-s+x-y,s)ds
            + g_4\left(\frac{x-y}{2}\right)\\
L_3     &=\int_0^y\!F_3(s+x-y,s)L(s+x-y,s)ds\\
        &\quad -\int_0^{\frac{x-y}{2}}\!F_2(-s+x-y,s)L(-s+x-y,s)ds
            - g_2\left(\frac{x-y}{2}\right).
\end{align*}
Applying to the latter the successive approximation
method and using the fact that~$g_2$ and $g_4$ belong to
$L_2(0,1)$, one proves existence of a unique solution~$L$
belonging to $L_2\bigl(\Omega,{\mathbb C}^4\bigr)$. This gives a unique
mild solution~$K$ of the original hyperbolic
system~\eqref{eq:Dir.K}--\eqref{eq:K_b.c_2} and proves that~$K\in
L_2(\Omega,\mathcal{M}_2)$.
\end{proof}

Throughout the rest of the paper, we shall assume that the
matrix-valued potentials $P$ and $Q$ are Hermitian, i.e., that
$P^*(x) = P(x)$ and $Q^*(x) = Q(x)$ a.e.\ on $[0,1]$. Then the
corresponding Dirac operators~$\sD(P)$ and $\sD(Q)$ are
self-adjoint and have simple discrete spectra. Moreover, the
eigenvalues of $\mu_n(P)$ of $\sD(P)$ can be labeled by $n\in\bZ$
so that $\mu_n = \pi n + \tfrac12\tr P + \mathrm{o(1)}$ as
$|n|\to\infty$ (see~\cite{LevSar:1991}), and similarly for the eigenvalues
of~$\sD(Q)$.

Just as for an operator pencil $T_{p,q}$, we define the
\emph{spectral data} for a Dirac operator~$\sD$ as the
set~$\{(\la,\al)\mid \la \in \sigma(\sD)\}$ of all eigenpairs
$(\la,\al)$ composed of eigenvalues $\la$ and the corresponding
norming constants~$\al$. We denote by~$\sd(P)$ (resp. by~$\sd(Q)$)
the spectral data of~$\sD(P)$ (resp., the spectral data
of~$\sD(Q)$).

Also, $\sX=\sX(P,Q)$ will stand for the transformation operator of the form~\eqref{eq:Tr.op} for the differential expressions~$\ell(P)$ and $\ell(Q)$ on the domain~$\cD_0$, with $R$ given by~\eqref{eq:Dir.R} and \eqref{eq:Dir.theta}. We shall write $\sX=\sR + \sK$, where $\sR\bu(x):=R(x)\bu(x)$ is the operator of multiplication by $R$ and
\[
    \sK\bu(x):= \int_0^x K(x,s)\bu(s)\,ds
\]
is the corresponding integral operator. We observe that for Hermitian~$P$ and $Q$ the functions~$i \theta_1$ and $\theta_2$ are real valued; in particular, the operator~$\sR$ is unitary.

Before discussing further properties of the transformation operator~$\sX$, the following simple but useful remark seems in place.

\begin{remark}\label{rem:Dir.X}
Since the transformation operator~$\sX$ intertwines the Dirac differential expressions~$\ell(P)$ and $\ell(Q)$, it is straightforward to see that the relation
\[
    \bigl(\ell(P)-\lambda\bigr)\bu = \mathbf{f}
\]
holds if and only if for $\bv:=\sX\bu$ and $\bg:=\sX\mathbf{f}$
one gets
\[
    \bigl(\ell(Q) - \lambda\bigr)\bv = \bg.
\]
\end{remark}

\begin{lemma}\label{lem:K=0}
Assume that $P$ and~$Q$ are Hermitian and that~$\sd(P)=\sd(Q)$.
Then the integral operator~$\sK$ in the transformation operator~$\sX=\sX(P,Q)$ is the zero operator.
\end{lemma}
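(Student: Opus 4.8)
The plan is to exploit the fact that $\sX=\sR+\sK$ intertwines $\ell(P)$ and $\ell(Q)$ on $\cD_0$ together with the hypothesis $\sd(P)=\sd(Q)$, and to show that $\sX$ must actually map eigenfunctions of $\sD(P)$ onto eigenfunctions of $\sD(Q)$ in a norm-preserving way, which forces the Volterra part $\sK$ to vanish. First I would recall from Remark~\ref{rem:Dir.X} that if $\bu$ solves $(\ell(P)-\la)\bu=0$ then $\bv:=\sX\bu$ solves $(\ell(Q)-\la)\bv=0$; moreover, since $R(0)=I$ and $\sK\bu(0)=0$, one has $\bv(0)=\bu(0)$, so $\sX$ preserves the initial conditions $u_1(0)=1$, $u_2(0)=0$. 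Hence, for each eigenvalue $\mu_n$ of $\sD(P)$ (which by $\sd(P)=\sd(Q)$ is also an eigenvalue of $\sD(Q)$), if $\bu_n$ denotes the eigenfunction of $\sD(P)$ normalized by $u_1(0)=1$, $u_2(0)=0$, then $\sX\bu_n$ is an eigenfunction of $\sD(Q)$ with the same normalization, so $\sX\bu_n=\bv_n$ is precisely the normalized eigenfunction of $\sD(Q)$ attached to $\mu_n$. But the norming constants coincide: $\|\bu_n\|^2=\al_n=\|\bv_n\|^2$ by hypothesis. Since $\sR$ is unitary (the functions $i\theta_1$, $\theta_2$ being real-valued for Hermitian $P,Q$), we get
\[
    \|\bu_n\|^2 = \|\bv_n\|^2 = \|\sX\bu_n\|^2 = \|\sR\bu_n + \sK\bu_n\|^2
        = \|\bu_n\|^2 + 2\myRe(\sR\bu_n,\sK\bu_n) + \|\sK\bu_n\|^2,
\]
so that $2\myRe(\sR\bu_n,\sK\bu_n) + \|\sK\bu_n\|^2 = 0$ for every $n\in\bZ$.

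The second step is to upgrade this relation into $\sK\bu_n=0$ for all $n$. I expect the cleanest route is to consider the operator $\sX^*\sX = I + (\sR^*\sK + \sK^*\sR + \sK^*\sK)$; call the bracketed self-adjoint operator $\sG$, so that $(\sX^*\sX\,\bu_n,\bu_n)=\|\bu_n\|^2 + (\sG\bu_n,\bu_n)$, and the displayed identity says $(\sG\bu_n,\bu_n)=0$ for all $n$. Here $\sG = \sR^*\sK + \sK^*\sR + \sK^*\sK$ is a sum of a Volterra operator, its adjoint (an anti-Volterra operator), and a further compact term; the key structural observation is that $\sR^*\sK$ is again a Volterra (lower-triangular) integral operator with an $L_2(\Omega)$ kernel — call it $\sK_1$, with kernel $K_1(x,s)=R^*(x)K(x,s)$ — and correspondingly $\sK^*\sR$ has the kernel supported on $\{0<x<s<1\}$, while $\sK^*\sK$ has a genuine two-dimensional $L_2$ kernel. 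Since $\{\bu_n\}_{n\in\bZ}$ is an orthogonal basis of $L_2(0,1)\times L_2(0,1)$ (eigenfunctions of the self-adjoint operator $\sD(P)$ with simple spectrum), vanishing of all diagonal matrix elements $(\sG\bu_n,\bu_n)$ is equivalent to vanishing of the diagonal of $\sG$ in this basis; but $\sG$ is self-adjoint, so this does not immediately give $\sG=0$. Instead I would argue: since $\sX=\sR+\sK$ is boundedly invertible (its inverse is again of the form identity-plus-Volterra, by the standard Volterra Neumann-series / resolvent argument applied to the kernel equation), $\sX$ maps the orthogonal basis $\{\bu_n\}$ of $L_2\times L_2$ onto the \emph{orthogonal} family $\{\bv_n\}$ — indeed $(\bv_n,\bv_m)=0$ for $n\ne m$ since the $\bv_n$ are eigenfunctions of $\sD(Q)$ for distinct eigenvalues — and, as shown above, $\|\bv_n\|=\|\bu_n\|$. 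Therefore $\sX$ maps an orthogonal basis to an orthogonal family of the same norms, hence $\sX$ is unitary (its Gram matrix in the basis $\{\bu_n/\|\bu_n\|\}$ is the identity), i.e. $\sX^*\sX=I$, i.e. $\sG=0$, i.e. $\sR^*\sK+\sK^*\sR+\sK^*\sK=0$.

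The final step is to extract $\sK=0$ from $\sR^*\sK + \sK^*\sR + \sK^*\sK = 0$. Multiplying on the left by $\sR$ and writing $\sL:=\sR\sR^*\sK = \sK$... more precisely, since $\sR$ is unitary, the relation reads $\sK_1 + \sK_1^* + \sK_1^*\sK_1 = 0$ where $\sK_1:=\sR^*\sK$ has the same lower-triangular Volterra structure as $\sK$ (note $\sK=0 \iff \sK_1=0$). Equivalently, $(I+\sK_1)^*(I+\sK_1)=I$, so $I+\sK_1$ is unitary; but a Volterra operator $\sK_1$ with square-integrable kernel has quasi-nilpotent, hence $\sigma(I+\sK_1)=\{1\}$, and a unitary operator whose spectrum is the single point $1$ is the identity. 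Therefore $\sK_1=0$ and hence $\sK=0$, as claimed. I expect the main obstacle to be the careful justification that $I+\sK_1$ (equivalently $\sX$) is boundedly invertible with Volterra inverse and that $\sK_1$ is quasinilpotent in the distributional/$L_2$-kernel setting — this is standard for Volterra integral operators with $L_2(\Omega)$ kernels (the iterated kernels' norms decay superexponentially by the shrinking-triangle estimate), and one can alternatively finish by the spectral-radius argument on $I+\sK_1$; either way the analytic content is routine once the algebraic identity $\sX^*\sX=I$ has been secured, so the real work is the orthogonality-and-norm bookkeeping in the first two paragraphs.
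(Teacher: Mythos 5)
Your proposal is correct and follows essentially the same route as the paper: you show $\sX$ carries the normalized eigenfunctions of $\sD(P)$ to those of $\sD(Q)$ with matching norms, conclude $\sX^*\sX=\sI$, rewrite this as $(\sI+\sR^{-1}\sK)^*(\sI+\sR^{-1}\sK)=\sI$, and use the Volterra structure of $\sR^{-1}\sK$ to kill it. The only (harmless) deviation is the last step: the paper compares the Neumann-series inverse of $\sI+\sR^{-1}\sK$ (lower-triangular kernel) with its adjoint (upper-triangular kernel) to force both to vanish, whereas you invoke quasinilpotence to get $\sigma(\sI+\sR^{-1}\sK)=\{1\}$ and conclude by normality that this unitary operator is the identity — both finishes rest on the same quasinilpotent Volterra fact and are equally valid.
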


\begin{proof}
We enumerate the eigenpairs in $\sd(P)=\sd(Q)$ as $(\la_n,\al_n)$ for $n\in\bZ$ and denote by $\bu_n$ the eigenfunction of the operator~$\sD(P)$ corresponding to the eigenvalue~$\lambda_n$ and normalized via $\bu_n(0)=(1,0)^\mathrm{t}$. Then
$\bv_n:=\sX(P,Q)\bu_n$ is the corresponding eigenfunction for the operator~$\sD(Q)$ satisfying the same initial condition.

The sequences $(\bu_n)_{n\in\bZ}$ and $(\bv_n)_{n\in\bZ}$ form orthogonal bases of the Hilbert space~$L_2\bigl((0,1),\bC^2\bigr)$; moreover, $\|\bu_n\|=\|\bv_n\|=\sqrt{\alpha_n}$. Thus we conclude that the operator~$\sX$ is unitary, i.e., that
\[
      (\sR+\sK)^\ast(\sR+\sK)=\sI,
\]
where~$\sI$ is the identity operator in~$L_2\bigl((0,1),\bC^2\bigr)$. Since~$\sR$ is unitary, the last equality may be rewritten as
\[
    (\sI+\sR^{-1}\sK)^{\ast}(\sI+\sR^{-1}\sK)=\sI.
\]

We recall that~$\sK$ (and thus~$\sR^{-1}\sK$) is an integral
operators with lower-triangular kernel belonging to
$L_2(\Omega,\mathcal{M}_2)$. Slightly modifying the arguments
of~\cite[Sec.~IV.1]{RieNag:1990}, one sees that $\sR^{-1}\sK$ is a
Volterra operator, whence the inverse $(\sI+\sR^{-1}\sK)^{-1}$
exists and is given by the Neumann series,
\[
    (\sI+\sR^{-1}\sK)^{-1}
        = \sI - \sR^{-1}\sK + (\sR^{-1}\sK)^2 + \cdots
        = \sI + \widetilde{\sK}
\]
with $\widetilde{\sK}$ being an integral operator with lower-triangular kernel. On the other hand, the operator~$(\sR^{-1}\sK)^\ast$ is an integral operator with upper-triangular kernel, and the relations
\[
    \sI+(\sR^{-1}\sK)^{\ast} = (\sI+\sR^{-1}\sK)^{-1}  = \sI + \widetilde{\sK}
\]
imply that $(\sR^{-1}\sK)^{\ast}=\widetilde{\sK}=0$. Therefore $\sK=0$, and the proof is complete.
\end{proof}

The following theorem gives necessary and sufficient conditions on transformation operators $\sX(P,Q)$ in order that the Dirac operators $\sD(P)$ and $\sD(Q)$ should have the same spectral data. It will essentially be used in the reconstruction procedure of the next section.

\begin{theorem}
\label{thm:sp.data.coinc.crit}
Assume that the matrix potentials~$P$ and~$Q$ are Hermitian. Then the spectral data for the operators~$\sD(P)$ and~$\sD(Q)$ coincide, i.e. $\sd(P)=\sd(Q)$,
if and only if the transformation operator~$\sX(P,Q)$ for $\ell(P)$ and $\ell(Q)$ on the domain~$\cD_0$ only contains the unitary part $\sR$ (i.e., $\sK=0$) and~$\theta_2(1)=\pi n$ for some $n\in\bZ$.
\end{theorem}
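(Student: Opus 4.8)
The plan is to establish the two implications separately, the substance lying in how the boundary condition at $x=1$ constrains $R(1)$. For necessity, the vanishing of $\sK$ is immediate from Lemma~\ref{lem:K=0} (both potentials are Hermitian and $\sd(P)=\sd(Q)$ by assumption), so the only new point is to deduce $\theta_2(1)\in\pi\bZ$, which I would extract by pushing an eigenfunction of $\sD(P)$ through $\sX$ and reading off the second component at $x=1$. For sufficiency, when $\sX=\sR$ and $\theta_2(1)=\pi n$, I would show that $\sR$ is a unitary operator carrying $\dom\sD(P)$ onto $\dom\sD(Q)$ and intertwining $\ell(P)$ with $\ell(Q)$, hence $\sR\sD(P)\sR^{-1}=\sD(Q)$; since $\sR$ also preserves the initial normalization $\bu(0)=(1,0)^{\mathrm t}$ used in defining the norming constants, the spectral data must coincide.

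In more detail, for necessity I would fix a common eigenvalue $\la$ and let $\bu$ be the eigenfunction of $\sD(P)$ for $\la$ normalized by $\bu(0)=(1,0)^{\mathrm t}$ — a legitimate normalization because, by uniqueness for the Dirac initial value problem, an eigenfunction cannot vanish at $x=0$. Since $\sK=0$, the image $\bv:=\sX\bu=R(\cdot)\bu(\cdot)$ solves $\ell(Q)\bv=\la\bv$ by Remark~\ref{rem:Dir.X} and satisfies $\bv(0)=R(0)\bu(0)=(1,0)^{\mathrm t}$; uniqueness for the Dirac initial value problem then identifies $\bv$ with the suitably normalized $\sD(Q)$-eigenfunction for $\la$, so that $v_2(1)=0$. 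On the other hand $\bu(1)=(u_1(1),0)^{\mathrm t}$ with $u_1(1)\ne0$ (again by uniqueness), and computing the second component of $R(1)\bu(1)$ from~\eqref{eq:Dir.R} leaves $e^{\theta_1(1)}\sin\theta_2(1)\,u_1(1)=0$. As $i\theta_1$ is real valued, $e^{\theta_1(1)}\ne0$, whence $\sin\theta_2(1)=0$, i.e.\ $\theta_2(1)=\pi n$ for some $n\in\bZ$.

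For sufficiency, assuming $\sK=0$ and $\theta_2(1)=\pi n$, I would first note that $\sX=\sR$ is unitary since $i\theta_1$ and $\theta_2$ are real valued, and that $R,R^{-1}=e^{-\theta_1I-\theta_2J}$ lie in $W_2^1((0,1),\mathcal M_2)$, so $\sR$ maps $W_2^1\times W_2^1$ bijectively onto itself. Because $R(0)=I$ and, by the assumption $\theta_2(1)=\pi n$, $R(1)=(-1)^ne^{\theta_1(1)}I$, we get $(\sR\bu)_2(0)=u_2(0)$ and $(\sR\bu)_2(1)=(-1)^ne^{\theta_1(1)}u_2(1)$, so $\sR$ carries $\dom\sD(P)$ bijectively onto $\dom\sD(Q)$. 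Combined with the intertwining $\sR\ell(P)\bu=\ell(Q)\sR\bu$ valid on $\cD_0\supset\dom\sD(P)$, this gives $\sR\sD(P)\sR^{-1}=\sD(Q)$, hence $\sigma(\sD(P))=\sigma(\sD(Q))$. Finally, for each common eigenvalue $\la$ the eigenfunction $\bu$ of $\sD(P)$ normalized by $\bu(0)=(1,0)^{\mathrm t}$ is carried by $\sR$ to the eigenfunction $\bv=\sR\bu$ of $\sD(Q)$ with $\bv(0)=R(0)\bu(0)=(1,0)^{\mathrm t}$, which is precisely the one entering the definition of the norming constant; unitarity of $\sR$ gives $\|\bv\|=\|\bu\|$, so the norming constants agree and $\sd(P)=\sd(Q)$.

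The main obstacle I anticipate is the necessity half, and within it the step $\theta_2(1)\in\pi\bZ$: one must be careful to justify that $\bv=\sX\bu$ is genuinely the normalized $\sD(Q)$-eigenfunction (via Remark~\ref{rem:Dir.X} and uniqueness for the Dirac initial value problem) and that eigenfunctions do not degenerate at the endpoints, after which the conclusion drops out of the explicit shape of $R(1)$. The remaining bookkeeping for sufficiency — unitarity of $\sR$, the $W_2^1$-regularity of $R^{\pm1}$, and compatibility of $\sR$ with the boundary conditions at $0$ and $1$ — is routine once the identity $R(1)=(-1)^ne^{\theta_1(1)}I$ is in hand.
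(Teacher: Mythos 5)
Your proposal is correct and follows essentially the same route as the paper: $\sK=0$ comes from Lemma~\ref{lem:K=0}, the value $\theta_2(1)\in\pi\bZ$ is read off from the second component of $R(1)\bu(1)$, and sufficiency uses that $R(0)=I$ and $R(1)$ a multiple of $I$ make the unitary $\sR$ respect the boundary conditions and the normalization, so spectra and norming constants coincide. The extra details you supply (uniqueness for the Dirac initial value problem, $W_2^1$-regularity of $R^{\pm1}$, phrasing sufficiency as $\sR\sD(P)\sR^{-1}=\sD(Q)$) are harmless elaborations of the paper's argument rather than a different method.
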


\begin{proof}
\emph{Necessity}. If the spectral data for the operators~$\sD(P)$
and~$\sD(Q)$ coincide, then $\sK=0$ by Lemma~\ref{lem:K=0} and thus $\sX(P,Q)=\sR$.
Take an arbitrary eigenvalue~$\lambda$ of $\sD(P)$ and denote by $\bu=(u_1,u_2)^{\mathrm{t}}$ the corresponding eigenfunction. Then
\[
    \bv(x):=\sR\bu
        =\exp\bigl(\theta_1(x)\bigr)
            \left(\begin{array}{c}
                 \cos\theta_2(x)u_1(x)+\sin\theta_2(x)u_2(x)\\
                -\sin\theta_2(x)u_1(x)+\cos\theta_2(x)u_2(x)
                  \end{array}%
            \right)
\]
is the eigenfunction for the operator~$\sD(Q)$ corresponding to the same eigenvalue~$\lambda$.
Observe now that the second components of~$\bu(1)$ and~$\bv(1)$ are equal
to zero. Since
\[
    \bv(1)=\exp(\theta_1(1))\left(%
\begin{array}{c}
   \cos\theta_2(1)u_1(1)\\
   -\sin\theta_2(1)u_1(1)
\end{array}%
\right)
\]
and~$u_1(1)\ne0$, we conclude that $\sin\theta_2(1)=0$, and thus that $\theta_2(1)=\pi n$ for an integer~$n$ as claimed. This completes the proof of the necessity parts.

\smallskip

\emph{Sufficiency}. Suppose that $\sX(P,Q)=\sR$ and that $\lambda$ is an eigenvalue of the operator~$\sD(P)$ with a corresponding eigenfunction~$\bu$.
Consider the vector~$\bv=\sR \bu$; then $\ell(Q)\bv = \lambda\bv$. The assumption~$\theta_2(1)=\pi n$ for an integer~$n$ implies that $R(1)$ is a multiple of the identity matrix~$I$. Since also $R(0)=I$, we conclude that the second component~$v_2$ of $\bv$ vanishes at both endpoints and thus $\bv\in\dom\sD(Q)$.

The above implies that the spectrum of $\sD(P)$ is contained in
the spectrum of~$\sD(Q)$. As $\sR$ is boundedly invertible, the
roles of $P$ and $Q$ can be interchanged, thus showing that the
spectra of the operators coincide. Finally, since the operator
$\sR$ is unitary and preserves the initial conditions, the norming
constants for $\sD(P)$ and $\sD(Q)$ are equal, i.e.,
$\sd(P)=\sd(Q)$. The proof is complete.
\end{proof}


\section{Reconstruction of the pencil: Existence}\label{sec:Dir-rec}


Our aim in this section is to prove Theorem~\ref{thm:pre.exist} on existence of a pencil~$T_{p,q}$ with potentials~$p\in L_{2,\bR}(0,1)$ and $q\in W_{2,\bR}^{-1}(0,1)$ having the prescribed element~$\sd$ of~$\SD$ as its spectral data. The uniqueness of reconstruction will be dealt with in the next section.

\begin{proof}[Outline of the proof of Theorem~\ref{thm:pre.exist}]
First, we construct a Dirac operator~$\sD(Q)$ in the ``shifted'' AKNS form whose nonzero spectrum and corresponding norming constants are given by~$\sd$.
Then we use the transformation operator technique to transform $\sD(Q)$ to another Dirac operator~$\sD(P)$ with the same spectral data and with potential~$P=(p_{ij})$ of the form~\eqref{eq:Dir.P}. Setting then
\begin{equation}
    \label{eq:pq}
        p := \frac{p_{22}}{2}, \quad
        q := -p_{12}'+p_{12}^2
\end{equation}
and recalling the results of Section~\ref{sec:Dir}, we conclude that the operator pencil~$T_{p,q}$ is a solution of the inverse spectral problem, thus completing the proof.
\end{proof}

The rest of this section contains details of constructing the potential $Q$ of the ``shifted'' AKNS form and transforming it to a $P$ of the form~\eqref{eq:Dir.P}.

We start with taking an arbitrary set~$\sd=\{(\la_n,\al_n)\}_{n\in\bZ^*}$ in~$\SD$. The enumeration of~$\la_n$ is uniquely determined by the requirement that $\la_{-1}<0$ and $\la_1>0$ and fixes the number $h$ in the asymptotic representation of part~(i) of Definition~\ref{def:SD}. We then take an arbitrary $\alpha_0>0$, put $\la_0:=0$, and augment the set~$\sd$ with~$(\la_0,\al_0)$ to become $\sd^*$.

Now we recall the following facts from the inverse spectral theory
for AKNS Dirac operators,
see~\cite{LevSar:1991,AlbHryMk:2005:RJMP}. Denote by $\cQ_0$ the
set of $2\times2$ matrix-valued functions $Q_0$ of the AKNS normal
form, namely
\begin{equation}\label{eq:ex.Q0}
    \cQ_0:=\left\{ Q_0=
    \begin{pmatrix}
        q_1 & q_2 \\ q_2 & -q_1
    \end{pmatrix} \mid q_j \in L_{2,\bR}(0,1)\right\}.
\end{equation}
For $Q_0\in \cQ_0$, the Dirac operator~$\sD(Q_0)$ is  self-adjoint, has a simple discrete spectrum, and its eigenvalues can be enumerated as $\la_n(Q_0)$, $n\in\bZ$, so that $\la_n(Q_0)$ increase with $n$ and $\la_n(Q_0) = \pi n + \tilde \la_n(Q_0)$, with an $\ell_2(\bZ)$-sequence~$(\tilde \la_n)$. The corresponding norming constants $\al_n(Q_0)$ are positive and the remainders $\tilde \alpha_n(Q_0):=\alpha_n(Q_0)-1$ form an $\ell_2(\bZ)$-sequence.

Conversely, it follows from the results
of~\cite{AlbHryMk:2005:RJMP,Ser:2006} that every set
$\{(\la_n,\alpha_n)\}_{n\in\bZ}$ with $\la_n$ and $\al_n$
possessing the above properties is the set of spectral data for a
unique AKNS Dirac operator $\sD(Q_0)$ with $Q_0\in\cQ_0$.

The set $\sd^*$ (i.e., the set $\sd$ augmented with the pair $(\la_0,\al_0)$ as above) looks like spectral data for an AKNS Dirac operator, save that the asymptotics of~$\la_n$ is shifted by some number~$h$; cf.~the definition of the set $\SD$. For $h\in\bR$, we denote by
\[
    \cQ_h:= \{Q_0+hI \mid Q_0 \in \cQ_0\}
\]
the set of $h$-\emph{shifted} AKNS matrix potentials; then the following holds true.

\begin{proposition}\label{pro:exist.Q}
For an arbitrary $\sd \in\SD$, fix $h\in\bR$ in the representation of~(i) in Definition~\ref{def:SD} so that $\la_{-1}<0$ and $\la_1>0$, and denote by $\sd^*$ augmentation of $\sd$ by a pair $(\la_0,\al_0)$, with $\la_0:=0$ and a positive $\al_0$. Then there exists a unique potential $Q\in\cQ_h$ such that $\sd^*$ gives the spectral data for the Dirac operator~$\sD(Q)$.
\end{proposition}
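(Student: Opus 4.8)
The plan is to reduce the claim to the already-known inverse spectral theory for (unshifted) AKNS Dirac operators by means of a spectral shift. Given $\sd=\{(\la_n,\al_n)\}_{n\in\bZ^*}\in\SD$, with $h$ fixed as in the statement and an arbitrary $\al_0>0$, I would form $\sd^*$ by adjoining $(\la_0,\al_0)$ with $\la_0:=0$, and then set $\mu_n:=\la_n-h$ for $n\in\bZ$. Since $\la_{-1}<0=\la_0<\la_1$ and the $\la_n$ strictly increase with $n\in\bZ^*$, the whole sequence $(\la_n)_{n\in\bZ}$, and hence $(\mu_n)_{n\in\bZ}$, is strictly increasing; moreover $\mu_n=\pi n+\tilde\la_n$ with $(\tilde\la_n)\in\ell_2(\bZ)$. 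Combined with $\al_n>0$ and $(\al_n-1)\in\ell_2(\bZ)$, this shows that $\{(\mu_n,\al_n)\}_{n\in\bZ}$ satisfies precisely the conditions under which the inverse spectral theory for AKNS operators applies.

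Next I would invoke the result of~\cite{AlbHryMk:2005:RJMP,Ser:2006} recalled above to obtain a unique $Q_0\in\cQ_0$ for which the Dirac operator~$\sD(Q_0)$ has spectral data $\{(\mu_n,\al_n)\}_{n\in\bZ}$, and put $Q:=Q_0+hI\in\cQ_h$. The point is that $\ell(Q)\bu=J\bu'+Q_0\bu+h\bu=\ell(Q_0)\bu+h\bu$, while adding a bounded multiplication operator does not change the domain (given by $u_2(0)=u_2(1)=0$), so $\sD(Q)=\sD(Q_0)+h\sI$. Hence $\sD(Q)$ is self-adjoint with simple discrete spectrum $\{\mu_n+h\}_{n\in\bZ}=\{\la_n\}_{n\in\bZ}$, its eigenfunctions coincide with those of~$\sD(Q_0)$, and because the normalization $u_1(0)=1$, $u_2(0)=0$ and the norm $\|\bu\|^2=\|u_1\|^2_{L_2}+\|u_2\|^2_{L_2}$ are unaffected, the norming constants are unchanged as well. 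Therefore $\sd(Q)=\{(\la_n,\al_n)\}_{n\in\bZ}=\sd^*$, which gives existence.

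For uniqueness, if $Q'\in\cQ_h$ also satisfies $\sd(Q')=\sd^*$, I would write $Q'=Q_0'+hI$ with $Q_0'\in\cQ_0$ and run the shift backwards: $\sD(Q_0')=\sD(Q')-h\sI$ then has spectral data $\{(\mu_n,\al_n)\}_{n\in\bZ}$, so the uniqueness clause of the AKNS inverse theorem forces $Q_0'=Q_0$ and hence $Q'=Q$. The one place that requires a little care is the bookkeeping in this shift step: one must check that passing from $\sD(Q_0)$ to $\sD(Q_0)+h\sI$ respects the labelling of the eigenvalues by $n\in\bZ$ dictated by their asymptotics (here $\tr Q=2h$, consistent with the $+h$ shift of the asymptotics), and that the eigenfunctions, and thus the norming constants, are literally the same; beyond that the argument is routine.
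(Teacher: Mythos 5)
Your argument is correct and is essentially the proof the paper intends: the proposition is presented there as an immediate consequence of the recalled inverse spectral theory for AKNS operators via the observation $\sD(Q_0+hI)=\sD(Q_0)+h\sI$, which is exactly the shift $\mu_n=\la_n-h$ you carry out, including the (trivial) bookkeeping that adjoining $\tilde\mu_0=-h$ keeps the remainder sequence in $\ell_2(\bZ)$ and that eigenfunctions, normalization and norming constants are unchanged. No gaps; your existence and uniqueness steps match the paper's route.
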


For an arbitrary $Q\in L_2\bigl((0,1),\mathcal{M}_2\bigr)$, we introduce the set
\[
    \Iso(Q):=\{\tilde Q\in L_2\bigl((0,1),\mathcal{M}_2\bigr) \mid
    \sd(\tilde Q)=\sd(Q)\}
\]
of potentials \emph{isospectral} with a given~$Q$ and denote by~$\cP$ the set of all potentials of the form~\eqref{eq:Dir.P}, i.e.,
\[
    \mathcal{P}:=\{P=(p_{ij})_{i,j=1}^2 \mid
        p_{ij}\in L_{2,\mathbb{R}}(0,1),
        \ p_{11}=0,\ p_{12}=p_{21}\}.
\]
The following result is crucial in constructing a pencil~$T_{p,q}$ with the given spectral data.

\begin{theorem}\label{thm:exist_uniq_P}
Assume that~$Q \in \cQ_h$ is such that $\la=0$ is an eigenvalue of the Dirac operator~$\sD(Q)$. Then there exists a unique~$P\in\mathcal{P}$ such that
\(
    \Iso(Q)\cap\mathcal{P}=\{P\}.
\)
\end{theorem}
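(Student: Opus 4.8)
The goal is to show that among all matrix potentials isospectral with a given $h$-shifted AKNS potential $Q$ having $0$ in its spectrum, there is exactly one of the special form~\eqref{eq:Dir.P}. The natural strategy is to use the transformation operator machinery developed in Section~\ref{sec:transf}: if $P\in\cP$ lies in $\Iso(Q)$, then $\sd(P)=\sd(Q)$ and Theorem~\ref{thm:sp.data.coinc.crit} forces the transformation operator $\sX(P,Q)$ to consist of its unitary multiplication part $\sR$ alone, with $\theta_2(1)=\pi n$ for some $n\in\bZ$. Thus $P$ must be obtainable from $Q$ by the explicit ``gauge'' formula encoded in~\eqref{eq:Dir.R}--\eqref{eq:Dir.theta}: writing $R(x)=e^{\theta_1(x)I+\theta_2(x)J}$, the intertwining relation $\ell(Q)\sR=\sR\ell(P)$ together with the boundary condition~\eqref{eq:K_b.c_1} with $K\equiv0$ gives
\[
    JR'(x)+Q(x)R(x)-R(x)P(x)=0,
\]
i.e. $P(x)=R^{-1}(x)\bigl(Q(x)R(x)+JR'(x)\bigr)$. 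So the problem reduces to showing that there is a unique pair of real scalar functions $(\theta_1,\theta_2)$ with $\theta_1(0)=\theta_2(0)=0$, $\theta_2(1)\in\pi\bZ$, and such that the resulting $P$ has vanishing $(1,1)$-entry and is symmetric (the remaining conditions in $\cP$ follow automatically since $Q$ is Hermitian and $\sR$ preserves Hermiticity).

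**Key steps, in order.** First I would compute $P=R^{-1}(QR+JR')$ explicitly in terms of the entries of $Q$ and the functions $\theta_1,\theta_2$. Because conjugation by $R$ is (up to the scalar $e^{2\theta_1}$-type factors, which cancel in the conjugation) a rotation by the angle $\theta_2$ in the $(I,J)$-decomposition of $\cM_2$, the trace and the $J$-trace transform in a controlled way, while the ``traceless-symmetric'' part of $Q$ rotates by angle $2\theta_2$. The constraint $p_{11}=0$, i.e. $\tr(JP)=0$ together with $p_{11}=-p_{22}$ failing — more precisely $p_{11}=\tfrac12(\tr P - (\text{traceless part})_{11})$ — will produce a first-order ODE for $\theta_2$ whose right-hand side involves $\theta_2$ and the entries of $Q$; the symmetry constraint $p_{12}=p_{21}$ will similarly produce an ODE for $\theta_1$. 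Second, I would argue that this coupled first-order system has a unique global solution on $[0,1]$ with the initial data $\theta_1(0)=\theta_2(0)=0$: the $\theta_2$-equation decouples (it does not involve $\theta_1$) and its right-hand side, being a bounded trigonometric function of $\theta_2$ with $L_2$ coefficients from $Q$, is globally Lipschitz in $\theta_2$, so Carath\'eodory existence-uniqueness applies on all of $[0,1]$; then $\theta_1$ is recovered by a single integration. Third, I would check the terminal condition $\theta_2(1)\in\pi\bZ$. This is where the hypothesis that $\la=0$ is an eigenvalue of $\sD(Q)$ enters: the eigenfunction $\bu_0$ of $\sD(Q)$ at $\la=0$, normalized by $\bu_0(0)=(1,0)^{\mathrm t}$, is explicit (as in Remark~\ref{rem:Dir.0}, after the gauge it has second component zero), and the requirement that the corresponding eigenfunction $\sR^{-1}\bu_0$ of $\sD(P)$ also satisfy the Dirichlet condition at $x=1$ — which it must, since for $P\in\cP$ of the form~\eqref{eq:Dir.P} the number $0$ is again an eigenvalue of $\sD(P)$ by Remark~\ref{rem:Dir.0} — forces $\sin\theta_2(1)=0$. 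One must verify that the $\theta_2$ produced by the ODE above indeed does the job, i.e. that the construction is self-consistent; alternatively one shows directly from the ODE and the structure of $Q$ near the eigenfunction that $\theta_2(1)\in\pi\bZ$ is automatic. Finally, with $\theta_1,\theta_2$ in hand I would set $P:=R^{-1}(QR+JR')$, verify $P\in\cP$ (the entries lie in $L_{2,\bR}$ because $\theta_1,\theta_2\in W_2^1\subset C[0,1]$ and $Q\in L_2$, and Hermiticity plus the two imposed scalar constraints give exactly the defining relations of $\cP$), and verify $P\in\Iso(Q)$ via Theorem~\ref{thm:sp.data.coinc.crit} using $\theta_2(1)\in\pi\bZ$. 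Uniqueness is immediate from the uniqueness of the ODE solution.

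**Main obstacle.** The delicate point is the terminal condition $\theta_2(1)=\pi n$: existence and uniqueness of $(\theta_1,\theta_2)$ solving the differential constraints is routine, but it is not obvious a priori that the solution determined by the \emph{initial} condition at $x=0$ also meets the required \emph{terminal} condition at $x=1$. The resolution should be that the differential equation for $\theta_2$ is precisely the one governing how the (necessarily one-dimensional) kernel of $\sD(Q)$ — equivalently, a distinguished solution of $\ell(Q)\bv=0$ — rotates; since $\la=0$ is an eigenvalue of $\sD(Q)$, that distinguished solution has second component vanishing at both endpoints, and the same rotation then sends it to a solution of $\ell(P)\bw=0$ whose second component vanishes at $x=0$ automatically and at $x=1$ exactly when $\theta_2(1)\in\pi\bZ$; conversely the factorized form~\eqref{eq:Dir.Afact}, which holds for any $P\in\cP$ with $p_{12}$ playing the role of $v$, guarantees $0\in\sigma(\sD(P))$, which is what pins down $\theta_2(1)$. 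Making this linkage precise — identifying the ODE for $\theta_2$ with the angular equation for the zero-energy solution and reading off $\theta_2(1)\in\pi\bZ$ from the eigenvalue hypothesis — is the crux of the argument; everything else is bookkeeping with the explicit formula $P=R^{-1}(QR+JR')$ and the regularity of $\theta_1,\theta_2$.
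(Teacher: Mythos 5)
Your proposal is correct and follows essentially the same route as the paper: gauge $Q$ by $R=e^{\theta_1 I+\theta_2 J}$, observe that symmetry forces $\theta_1\equiv 0$ while $p_{11}=0$ yields the scalar ODE \eqref{eq:theta} with a unique solution satisfying $\theta_2(0)=0$, use the zero eigenvalue of $\sD(Q)$ to get $\theta_2(1)\in\pi\bZ$, and invoke Theorem~\ref{thm:sp.data.coinc.crit} for both membership in $\Iso(Q)$ and uniqueness. The only slip is directional in your third step: it is the Dirichlet condition of the zero-eigenfunction $\bu_0$ of $\sD(Q)$ at $x=1$, combined with the fact that the second component of $\sR^{-1}\bu_0$ vanishes identically (first-order equation with zero initial value) while its first component never vanishes, that forces $\sin\theta_2(1)=0$ --- exactly the computation in the paper's Step~2 --- rather than any condition imposed on $\sR^{-1}\bu_0$ at $x=1$.
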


\begin{proof}
We divide the proof into three steps. First we prove that there is a unique $P\in\cP$ with the property that the transformation operator~$\sX(P,Q)$ between $\ell(P)$ and $\ell(Q)$ on~$\cD_0$ is just the operator~$\sR$ of multiplication by the matrix-valued function $R$ of~\eqref{eq:Dir.R}--\eqref{eq:Dir.theta}. On the second step we show that this $P$ is indeed isospectral with $Q$. Finally, we explain why there is no other potentials in $\Iso(Q)\cap\mathcal{P}$.

\underline{Step~1.} We write $Q \in \cQ_h$ as
\[
    Q = hI + \begin{pmatrix}q_1 & q_2 \\ q_2 & -q_1\end{pmatrix}
\]
with real-valued $q_1$ and $q_2$ in $L_2(0,1)$. Let~$\theta$ be an absolutely continuous function and $R=e^{\theta J}$; then $R$ commutes with $J$ and one finds that
\[
    R^{-1}\Bigl(J\frac{d}{dx} + Q\Bigr)R = J\frac{d}{dx} + R^{-1}JR' + R^{-1}QR.
\]
Therefore
\(
    \sR \ell(P) = \ell(Q) \sR
\)
for a unique matrix potential~$P$ equal to
\begin{equation}\label{eq:P:theta}
    \begin{aligned}
     P  & = R^{-1}JR'+R^{-1}QR \\
        & = (h-\theta')I
          + \left(
            \begin{array}{cc}
                q_{1}\cos2\theta - q_{2}\sin 2\theta
              & q_{1}\sin2\theta + q_{2}\cos 2\theta \\
                q_{1}\sin 2\theta + q_{2}\cos2\theta
              &-q_{1}\cos2\theta + q_{2}\sin 2\theta
            \end{array}%
            \right).
    \end{aligned}
\end{equation}
The potential~$P$ defined by~\eqref{eq:P:theta} belongs to the
set~$\cP$ if and only if
\begin{equation}\label{eq:theta}
     -\theta'+q_{1}\cos2\theta-q_{2}\sin 2\theta+h=0.
\end{equation}
There exists a unique solution of this equation over~$[0,1]$ satisfying the initial condition~$\theta(0)=0$. We call this solution $\theta_2$ and determine the corresponding potential~$P\in\cP$ through~\eqref{eq:P:theta} with $\theta=\theta_2$.  A straightforward calculation shows that this $\theta_2$ and $\theta_1\equiv0$ verify the relations~\eqref{eq:Dir.theta}; in particular, $\theta_2' =  h-\tfrac12p_{22}$. By Theorem~\ref{thm:Dir.tr-op}, the operator $\sR$ of multiplication by the matrix-valued function~$R=e^{\theta_2 J}$ of~\eqref{eq:Dir.R} is indeed the transformation operator between $\ell(P)$ and $\ell(Q)$ on~$\cD_0$.

\underline{Step~2.}
Next we claim that $\theta_2(1)=\pi n$ for some $n\in\bZ$.
By construction, $\sR^{-1}\ell(Q)\sR=\ell(P)$, so that the operator $\widetilde{\sD}(P):=\sR^{-1}\sD(Q)\sR$ is a Dirac operator defined via $\widetilde{\sD}(P)\bu = \ell(P)\bu$ on the domain consisting of those $\bu=(u_1,u_2)^\mathrm{t}\in L_2\bigl((0,1),\bC^2\bigr)$ for which $\sR\bu\in\dom\sD(Q)$.
The assumption that $\lambda=0$ is an eigenvalue of the Dirac operator~$\sD(Q)$ and the similarity of $\sD(Q)$ and $\widetilde \sD(P)$ imply that $\lambda=0$ is in the spectrum of~$\widetilde \sD(P)$, and we denote by $\bu^0=(u_1,u_2)^\mathrm{t}$ the corresponding eigenfunction.
As $R(0)=I$ and $\bv^0=(v_1,v_2)^\mathrm{t}:=\sR\bu^0$ is in the null-space of~$\sD(Q)$, we see that $u_2(0)=v_2(0)=0$, and thus~$u_2\equiv0$ by~\eqref{eq:Dir.system1}. Therefore,
\[
    \bv^0(1)
        = R(1)\bu^0(1)
        = \binom{u_1(1)\cos\theta_2(1)}{-u_1(1)\sin\theta_2(1)}.
\]
As~$u_1(1)\ne0$ and $v_2(1)=0$, we conclude that~$\sin\theta_2(1)=0$ and thus that $\theta_2(1)=\pi n$ for some~$n\in\bZ$.

Henceforth we have constructed a potential~$P\in\cP$ and a unitary operator~$\sR$ of the form~\eqref{eq:Dir.R}--\eqref{eq:Dir.theta} with $\theta_2(1)=\pi n$ for some $n\in\bZ$ so that $\sR$ is the transformation between~$\ell(P)$ and $\ell(Q)$ on~$\cD_0$. By Theorem~\ref{thm:sp.data.coinc.crit}, $P$ then belongs to~$\Iso(Q)$.

\underline{Step~3.}
Finally, suppose there is another~$P_1$ in the set~$\Iso(Q)\cap\cP$. By Theorem~\ref{thm:sp.data.coinc.crit}, the transformation operator $\sX(P_1,Q)$ between~$\ell(P_1)$ and~$\ell(Q)$ is equal to the operator~$\sR_1$ of multiplication by a matrix-valued function~$e^{\vartheta_1(x)I+\vartheta_2(x)J}$, where $\vartheta_1$ and $\vartheta_2$ are given as in~\eqref{eq:Dir.theta}, but with $P$ replaced by~$P_1$. Since $\vartheta_1\equiv0$ for Hermitian $P_1$ and $Q$ with real entries, we conclude that $P_1=P$ by step~1, thus completing the proof of Theorem~\ref{thm:exist_uniq_P}.
\end{proof}

Theorem~\ref{thm:exist_uniq_P} implies that there is a unique $P\in\cP$ such that $\sd(P) = \sd^*$, for every augmentation $\sd^*$ of the set $\sd$. As explained at the beginning of this section, the Dirac operator~$\sD(P)$ with every such $P$ yields a pencil~$T_{p,q}$ with spectral data~$\sd$, thus establishing Theorem~\ref{thm:pre.exist}.

However, as the augmented set~$\sd^*$ depends on the arbitrary choice of~$\alpha_0>0$,  different $\al_0$ lead to different~$P\in\cP$, and, plausibly, to different quadratic operator pencils~$T_{p,q}$. This uniqueness issue is discussed in detail in the next section.


\section{Reconstruction of the pencil: Uniqueness}\label{sec:Dir-rec-uniq}


In this section, we prove Theorem~\ref{thm:pre.uniq} and thus complete the study of the inverse spectral problem for quadratic pencils~$T_{p,q}$. Namely, we show that the matrix potentials $P\in\cP$ constructed in the previous section lead to the same~$p$ and $q$ for all choices of the positive parameter~$\al_0$.

We again start with an arbitrary element $\sd \in \SD$, set $\la_0:=0$, choose an arbitrary positive number~$\al_0$, and augment~$\sd$ with the pair~$(\la_0,\al_0)$. Then we construct a shifted AKNS potential $Q\in\cQ_h$ and the corresponding potential~$P\in\Iso(Q)\cap\cP$ whose spectral data coincide with the augmented set, see Proposition~\ref{pro:exist.Q}  and Theorem~\ref{thm:exist_uniq_P}.

Choose now a positive number $\tilde \al_0$ different from~$\al_0$.
Augmenting the set~$\sd$ with~$(\la_0,\tilde{\al}_0)$, we obtain
spectral data for another Dirac operator $\sD(\tilde Q)$ with potential~$\tilde Q\in\cQ_h$, and construct the corresponding potential~$\tilde P\in\Iso(\tilde Q)\cap \cP$.

It turns out that the potentials~$Q$ and~$\tilde{Q}$ are related
via the so-called double commutation transformations, see details
in~\cite[Sec.~3]{Tes:1998} and~\cite{AlbHryMyk:2007:JDE}.

\begin{proposition}\label{pro:uniq.Q-and-tildeQ}
Suppose that two potentials~$Q$ and~$\tilde{Q}$ from~$\cQ_h$ are
as described above, i.e., that the spectra of the corresponding Dirac
operators $\sD(Q)$ and $\sD(\tilde Q)$ coincide and the norming constants $\al_n$ and $\tilde \al_n$ only differ for $n=0$. Then
\begin{equation}
    \label{Q}
    \widetilde{Q}=Q+Q_*,
\end{equation}
where
\begin{align}\label{eq:uniq.Q*}
    Q_* &= c(x,\alpha_*)[\mathbf{v}(x)\mathbf{v}^{\mathrm{t}}(x)J
                         -J\mathbf{v}(x)\mathbf{v}^{\mathrm{t}}(x)],\\
    c(x,\alpha_*)
        &:=-\frac{\alpha_*}{1 + \alpha_*\int_0^x
                     \mathbf{v}^{\mathrm{t}}(s)\mathbf{v}(s)ds},\quad
    \alpha_*:=\frac{1}{\tilde{\alpha}_0}-\frac{1}{\alpha_0}
    \label{eq:uniq.c-alpha*}
\end{align}
and~$\mathbf{v}$ is an eigenfunction of the operator~$\sD(Q)$
corresponding to the eigenvalue~$\la_0=0$.
\end{proposition}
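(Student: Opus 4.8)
\textbf{Proof plan for Proposition~\ref{pro:uniq.Q-and-tildeQ}.}

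The plan is to recognize that passing from $\sD(Q)$ to $\sD(\tilde Q)$ by modifying only the single norming constant $\alpha_0$ (while fixing the whole spectrum and all other norming constants) is precisely the content of a \emph{Darboux--Crum/double commutation} transformation at the eigenvalue $\lambda_0 = 0$; such transformations for one-dimensional Dirac operators are worked out in~\cite{Tes:1998} and~\cite{AlbHryMyk:2007:JDE}. So the first step is to \emph{define} the candidate potential $\widetilde Q := Q + Q_*$ with $Q_*$ given by~\eqref{eq:uniq.Q*}--\eqref{eq:uniq.c-alpha*} and verify directly that the operator $\sD(\widetilde Q)$ has the asserted spectral data. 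Here $\mathbf{v}$ is the (real-valued) eigenfunction of $\sD(Q)$ for $\lambda_0 = 0$, which exists by hypothesis; normalizing it via $\mathbf{v}(0)=(1,0)^{\mathrm t}$ fixes $\|\mathbf v\|^2 = \alpha_0$. One checks that $Q_*$ is Hermitian with the same trace as the zero matrix along the relevant components (so that $\widetilde Q \in \cQ_h$ automatically, since $\tr Q_* = 0$ and $\tr (JQ_*) = 0$ pointwise — this follows from the algebraic identity that $\mathbf{v}\mathbf{v}^{\mathrm t} J - J \mathbf{v}\mathbf{v}^{\mathrm t}$ is off-diagonal-symmetric-trace-free when $\mathbf v$ is real); and that the scalar factor $c(x,\alpha_*)$ is smooth on $[0,1]$ because the denominator $1 + \alpha_* \int_0^x \mathbf v^{\mathrm t}\mathbf v$ stays positive — this is where the constraint $\alpha_* = 1/\tilde\alpha_0 - 1/\alpha_0 > -1/\alpha_0$, equivalently $\tilde\alpha_0 > 0$, is used.

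The core computation is the intertwining relation. First I would produce, for each eigenvalue $\lambda_n$ of $\sD(Q)$ with eigenfunction $\mathbf u_n$ (normalized by $\mathbf u_n(0)=(1,0)^{\mathrm t}$), an explicit formula for the corresponding eigenfunction $\widetilde{\mathbf u}_n$ of $\sD(\widetilde Q)$ of the form
\[
    \widetilde{\mathbf u}_n(x) = \mathbf u_n(x)
        + c(x,\alpha_*)\,\mathbf v(x)\!\int_0^x \mathbf v^{\mathrm t}(s)\,\mathbf u_n(s)\,ds,
\]
the standard double-commutation ansatz. Differentiating and using $J\mathbf u_n' + Q\mathbf u_n = \lambda_n \mathbf u_n$, $J\mathbf v' + Q\mathbf v = 0$, together with the Riccati-type identity satisfied by $c(x,\alpha_*)$, namely $c' = -c^2\,\mathbf v^{\mathrm t}\mathbf v/\alpha_*$ up to the obvious rescaling (obtained by differentiating the explicit quotient), one verifies $J\widetilde{\mathbf u}_n' + \widetilde Q\widetilde{\mathbf u}_n = \lambda_n\widetilde{\mathbf u}_n$. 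The boundary conditions come next: since $c(0,\alpha_*)$ is finite and the integral vanishes at $x=0$, we get $\widetilde{\mathbf u}_n(0) = \mathbf u_n(0) = (1,0)^{\mathrm t}$; the vanishing of the second component at $x=1$ needs the identity $\mathbf v^{\mathrm t}(s)\mathbf u_n(s) = \tfrac{1}{\lambda_n}\tfrac{d}{ds}(\mathbf v^{\mathrm t}(s) J \mathbf u_n(s))$ for $\lambda_n \ne 0$ (Wronskian-type), so that $\int_0^1 \mathbf v^{\mathrm t}\mathbf u_n$ can be evaluated in terms of boundary data that vanish, giving $\widetilde{\mathbf u}_n(1) = R\text{-multiple of }\mathbf u_n(1)$ in the second slot. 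Then a direct norm computation — again reducing $\int_0^1 (\widetilde{\mathbf u}_n)^{\mathrm t}\widetilde{\mathbf u}_n$ to the explicit quotient $c$ — yields $\|\widetilde{\mathbf u}_n\|^2 = \alpha_n$ for $n\ne 0$. The eigenvalue $\lambda_0 = 0$ is handled separately: the transformed null-vector is $\widetilde{\mathbf v}(x) = (1+\alpha_*\int_0^x\mathbf v^{\mathrm t}\mathbf v)^{-1}\mathbf v(x)$, and a short calculation gives $\|\widetilde{\mathbf v}\|^2 = (1/\alpha_0 + \alpha_*)^{-1} = \tilde\alpha_0$, as required. Finally, since $\{\widetilde{\mathbf u}_n\}_{n\in\bZ}$ is again an orthogonal basis with the prescribed norms, the spectral data of $\sD(\widetilde Q)$ equals the augmented set with $\tilde\alpha_0$; by the uniqueness half of Proposition~\ref{pro:exist.Q}, $\widetilde Q$ coincides with the potential defined as ``$\tilde Q$'' in the statement.

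The main obstacle I anticipate is bookkeeping in the low-regularity setting: the entries of $Q$ are only $L_2$, so $\mathbf v$ and the $\mathbf u_n$ lie in $W_2^1$, and products like $\mathbf v^{\mathrm t}\mathbf u_n$ are only $W_2^1$; one must be careful that all the ``differentiate the ansatz'' steps hold in the distributional sense and that the integration-by-parts identities are legitimate — but this is the same level of care already exercised in the proof of Theorem~\ref{thm:Dir.tr-op}, and the cleanest route is to cite~\cite{Tes:1998,AlbHryMyk:2007:JDE} for the general double-commutation formalism and only check that our normalization conventions (eigenfunctions normalized at $x=0$, the specific definitions~\eqref{eq:pre.al} of the norming constants, and the $h$-shift) match, so that the constants $\alpha_*$, $c(x,\alpha_*)$ come out exactly as in~\eqref{eq:uniq.Q*}--\eqref{eq:uniq.c-alpha*}. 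A secondary point worth a sentence is that we should confirm $\lambda_0=0$ remains a \emph{simple} eigenvalue of $\sD(\widetilde Q)$ and that no spurious eigenvalue is created or destroyed, which follows from the completeness of $\{\widetilde{\mathbf u}_n\}$ established above.
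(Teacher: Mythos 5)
Your plan follows essentially the paper's own route: the paper offers no proof of this proposition beyond invoking the double-commutation results of~\cite[Sec.~3]{Tes:1998} and~\cite{AlbHryMyk:2007:JDE}, and your sketch is exactly that formalism, verified directly (with, as you note, the option of simply citing those references after matching normalizations). Your individual checks are correct: normalizing $\mathbf v(0)=(1,0)^{\mathrm t}$ gives $\|\mathbf v\|^2=\alpha_0$, the denominator $1+\alpha_*\int_0^x\mathbf v^{\mathrm t}\mathbf v$ stays positive since its minimum is $1+\alpha_*\alpha_0=\alpha_0/\tilde\alpha_0>0$, the Wronskian identity $(\mathbf v^{\mathrm t}J\mathbf u_n)'=\lambda_n\mathbf v^{\mathrm t}\mathbf u_n$ yields both the vanishing of $\int_0^1\mathbf v^{\mathrm t}\mathbf u_n$ (hence the boundary condition at $x=1$) and preservation of the norming constants for $n\neq0$, and the transformed zero-mode $\mathbf v/w$ indeed has norm $\tilde\alpha_0$; the one formula to repair is the Riccati identity, which must read $c'=c^2\,\mathbf v^{\mathrm t}\mathbf v$ (differentiating $c=-\alpha_*/w$ gives $c'=\alpha_*^2\,\mathbf v^{\mathrm t}\mathbf v/w^2$), and it is precisely this identity, together with $\mathbf v^{\mathrm t}J\mathbf u_n=\lambda_nF_n$, that makes the intertwining computation close. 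The only genuine gap is that ``$\{\widetilde{\mathbf u}_n\}$ is an orthogonal basis'' --- equivalently, that $\sD(Q+Q_*)$ has no eigenvalues other than the $\lambda_n$ --- is asserted rather than established; close it either by quoting the unitary-equivalence statement for double commutation in the cited references, or by applying the reverse transformation (interchanging the roles of $\alpha_0$ and $\tilde\alpha_0$) to map any putative extra eigenfunction of $\sD(Q+Q_*)$ to an eigenfunction of $\sD(Q)$ at a new eigenvalue, a contradiction; after that, your identification of $Q+Q_*$ with $\tilde Q$ via the uniqueness part of Proposition~\ref{pro:exist.Q} legitimately finishes the proof.
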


Recalling the way the operators~$\sD(Q)$ and $\sD(P)$ are related and Remark~\ref{rem:Dir.0} on the form of the eigenvector $\bu$ of~$\sD(P)$ corresponding to the eigenvalue~$\la_0=0$, we can write a more explicit formula for~$Q_*$.

Indeed, by Theorem~\ref{thm:sp.data.coinc.crit}
the transformation operator~$\sX(P,Q)$ for the Dirac differential expressions~$\ell(P)$ and~$\ell(Q)$ on the set~$\cD_0$ is just the operator~$\sR$ of multiplication by the matrix-valued function~$R=e^{\theta_2 J}$, with $\theta_2$ being the solution of~\eqref{eq:theta} satisfying $\theta(0)=0$. Therefore
 $\bv = \sR \bu = e^{\theta_2 J} \bu$;
as $\bu=(u_1,0)^{\mathrm{t}}$ and $(e^{\theta_2 J})^{\mathrm{t}}=e^{-\theta_2 J}$,
we easily compute that $\bv^{\mathrm{t}} \bv = \bu^{\mathrm{t}} \bu = u_1^2$ and
\[
    \bu \bu^{\mathrm{t}}J - J \bu \bu^{\mathrm{t}} = u_1^2 J_1,
\]
with
\[
     J_1 := \left(%
             \begin{array}{cc}
                 0 & 1 \\
                 1 & 0 \\
             \end{array}%
           \right).
\]
Observing that the matrices $J$ and $J_1$ anticommute, we conclude that
 $e^{\theta_2 J}J_1=J_1e^{-\theta_2 J}$; henceforth,
\begin{align*}
    \bv \bv^{\mathrm{t}}J - J \bv \bv^{\mathrm{t}}
        &= e^{\theta_2 J}[\bu \bu^{\mathrm{t}}J - J \bu \bu^{\mathrm{t}}] e^{-\theta_2 J}\\
        &= u_1^2 e^{\theta_2 J}J_1 e^{-\theta_2 J} = u_1^2 e^{2\theta_2 J}J_1.
\end{align*}
Set $w(x):= 1 + \alpha_*\int_0^x u_1^2(s)\,ds$; then
     $c(x,\alpha_*)u_1^2(x) = - w'(x)/w(x) = -[\log w(x)]'$,
thus resulting in the following form of the potential $Q_*$.

\begin{corollary}\label{cor:uniq.Q-and-tildeQ}
For the potentials~$Q$ and~$\tilde{Q}$ of Proposition~\ref{pro:uniq.Q-and-tildeQ}, relation~\eqref{Q} holds with
\begin{equation}
\label{eq:uniq.Qstar}
\begin{aligned}
    Q_*(x)
        & = -[\log w(x)]' e^{2\theta_2(x) J}J_1 \\
        & = -[\log w(x)]'
        \left(%
        \begin{array}{cc}
            \sin 2\theta_2(x) & \cos 2\theta_2(x) \\
            \cos 2\theta_2(x) & -\sin 2\theta_2(x) \\
        \end{array}%
       \right),
\end{aligned}
\end{equation}
where~$\theta_2$ is the solution of~\eqref{eq:theta} satisfying $\theta(0)=0$, $u_1$ is the first component of the eigenvector~$\bu$ of $\sD(P)$ corresponding to the eigenvalue~$\la_0=0$, and
\begin{equation}\label{eq:uniq.d}
    w(x) = 1 + \alpha_*\int_0^x u_1^2(s)\,ds.
\end{equation}
\end{corollary}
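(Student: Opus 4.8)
The plan is to start from Proposition~\ref{pro:uniq.Q-and-tildeQ}, which already furnishes the double-commutation formula $\widetilde Q = Q + Q_*$ with $Q_*$ given by~\eqref{eq:uniq.Q*}--\eqref{eq:uniq.c-alpha*} in terms of an eigenfunction $\mathbf v$ of $\sD(Q)$ at $\la_0 = 0$. The goal is merely to rewrite $Q_*$ in the stated closed form by transporting everything back through the unitary transformation $\sR = e^{\theta_2 J}$ that links $\sD(P)$ and $\sD(Q)$, using Remark~\ref{rem:Dir.0} to identify the null eigenvector of $\sD(P)$ as $\bu = (u_1,0)^{\mathrm t}$ with $u_1 = \exp(-\int v)$. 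Since the derivation in the paragraphs preceding the corollary does exactly this, the proof of the corollary is essentially a collection of those computations, and I would present it as such.

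Concretely, I would proceed in the following steps. First, fix $\bu = (u_1,0)^{\mathrm t}$ to be the $\sD(P)$-eigenfunction for $\la_0 = 0$ normalized by $\bu(0) = (1,0)^{\mathrm t}$, and set $\bv := \sR\bu = e^{\theta_2 J}\bu$, which by Theorem~\ref{thm:sp.data.coinc.crit} (and the fact that $\sX(P,Q) = \sR$) is the corresponding $\sD(Q)$-eigenfunction with the same initial value; this is the $\mathbf v$ appearing in Proposition~\ref{pro:uniq.Q-and-tildeQ}. Second, compute $\bv^{\mathrm t}\bv = \bu^{\mathrm t}\bu = u_1^2$ using $(e^{\theta_2 J})^{\mathrm t} = e^{-\theta_2 J}$, and compute $\bu\bu^{\mathrm t}J - J\bu\bu^{\mathrm t} = u_1^2 J_1$ by a direct $2\times 2$ calculation with $J_1 = \binom{0\ 1}{1\ 0}$. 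Third, use that $J$ and $J_1$ anticommute, hence $e^{\theta_2 J}J_1 = J_1 e^{-\theta_2 J}$ and therefore
\[
    \bv\bv^{\mathrm t}J - J\bv\bv^{\mathrm t}
        = e^{\theta_2 J}\bigl[\bu\bu^{\mathrm t}J - J\bu\bu^{\mathrm t}\bigr]e^{-\theta_2 J}
        = u_1^2\,e^{2\theta_2 J}J_1,
\]
and expand $e^{2\theta_2 J}J_1$ explicitly into the matrix with entries $\pm\sin 2\theta_2$, $\cos 2\theta_2$ to match the second line of~\eqref{eq:uniq.Qstar}. Fourth, set $w(x) := 1 + \alpha_*\int_0^x u_1^2(s)\,ds$ as in~\eqref{eq:uniq.d}, observe $w' = \alpha_* u_1^2$, and verify from the definition~\eqref{eq:uniq.c-alpha*} of $c(x,\alpha_*)$ that $c(x,\alpha_*)u_1^2(x) = -w'(x)/w(x) = -[\log w(x)]'$. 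Substituting this scalar factor into $Q_* = c(x,\alpha_*)[\bv\bv^{\mathrm t}J - J\bv\bv^{\mathrm t}]$ yields precisely~\eqref{eq:uniq.Qstar}.

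There is no real obstacle here: every ingredient is either stated earlier (Proposition~\ref{pro:uniq.Q-and-tildeQ}, Remark~\ref{rem:Dir.0}, Theorem~\ref{thm:sp.data.coinc.crit}) or a one-line linear-algebra identity, and in fact the text immediately above the corollary already carries out the computation in full. The only point requiring a word of care is the identification of $\mathbf v = \sR\bu$ with the eigenfunction of $\sD(Q)$ appearing in Proposition~\ref{pro:uniq.Q-and-tildeQ}: one must note that the double-commutation formula there is insensitive to the normalization of the $\la_0$-eigenfunction up to the compensating definition of $\alpha_*$ and $c(x,\alpha_*)$, so choosing the specific representative $\bv = e^{\theta_2 J}(u_1,0)^{\mathrm t}$ with $u_1(0) = 1$ is legitimate. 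Thus I would simply write: \emph{This is a direct consequence of Proposition~\ref{pro:uniq.Q-and-tildeQ} and the computations preceding the statement.}
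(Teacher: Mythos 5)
Your proposal is correct and follows essentially the same route as the paper: the authors prove the corollary precisely by the computation you describe, namely transporting the null eigenvector $\bu=(u_1,0)^{\mathrm t}$ of $\sD(P)$ through $\sR=e^{\theta_2 J}$, using $(e^{\theta_2 J})^{\mathrm t}=e^{-\theta_2 J}$ and the anticommutation of $J$ and $J_1$ to get $\bv\bv^{\mathrm t}J-J\bv\bv^{\mathrm t}=u_1^2 e^{2\theta_2 J}J_1$, and then rewriting $c(x,\alpha_*)u_1^2=-[\log w]'$ with $w$ as in~\eqref{eq:uniq.d}. Your added remark on the normalization of the eigenfunction $\mathbf v$ versus the definition of $\alpha_*$ is a sensible point of care, consistent with what the paper leaves implicit.
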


Now we use the explicit formulae~ \eqref{eq:P:theta} and \eqref{eq:theta} determining the potential~$P$ from~$Q$ and the analogous formula for $\tilde P$ and $\tilde Q$ to derive the crucial result relating $P$ and $\tilde P$.

\begin{lemma}\label{lem:uniq.P-and-Ptilde}
For the entries $p_{ij}$ and $\tilde p_{ij}$ of the matrices $P$ and $\tilde P$ constructed above, the following relations hold:
\[
    \tilde p_{22} =  p_{22}, \qquad
    \tilde p_{12} =  p_{12} - (\log w)',
\]
with the function $w$ of~\eqref{eq:uniq.d}.
\end{lemma}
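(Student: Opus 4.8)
The plan is to exploit the explicit dependence of the constructed potentials on the rotation angle $\theta_2$ of~\eqref{eq:Dir.R}--\eqref{eq:Dir.theta}, and to show that this angle is \emph{the same} for $P$ and for $\tilde P$. Recall from Step~1 of the proof of Theorem~\ref{thm:exist_uniq_P} that $P$ is obtained from $Q = hI + Q_0$, with $Q_0\in\cQ_0$ having entries $q_1,q_2$ as in~\eqref{eq:ex.Q0}, through formula~\eqref{eq:P:theta} evaluated at $\theta=\theta_2$, where $\theta_2$ is the unique solution on $[0,1]$ of~\eqref{eq:theta} with $\theta_2(0)=0$; in particular $p_{12} = q_1\sin 2\theta_2 + q_2\cos 2\theta_2$ and, by~\eqref{eq:theta}, $h-\theta_2' = -q_1\cos 2\theta_2 + q_2\sin 2\theta_2 = \tfrac12 p_{22}$. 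In exactly the same way $\tilde P$ comes from $\tilde Q = hI + \tilde Q_0$, $\tilde Q_0\in\cQ_0$ with entries $\tilde q_1,\tilde q_2$, through~\eqref{eq:P:theta} at $\theta=\tilde\theta_2$, where $\tilde\theta_2$ solves the counterpart of~\eqref{eq:theta} with $q_1,q_2$ replaced by $\tilde q_1,\tilde q_2$ and $\tilde\theta_2(0)=0$. Finally, Corollary~\ref{cor:uniq.Q-and-tildeQ} together with~\eqref{eq:uniq.Qstar} gives
\[
    \tilde q_1 = q_1 - (\log w)'\sin 2\theta_2, \qquad \tilde q_2 = q_2 - (\log w)'\cos 2\theta_2,
\]
with $w$ the function of~\eqref{eq:uniq.d}. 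Thus the whole lemma will follow once we know that $\tilde\theta_2 = \theta_2$.

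To establish $\tilde\theta_2 = \theta_2$, I would substitute $\theta_2$ into the Cauchy problem defining $\tilde\theta_2$. The key is the cancellation
\[
    \tilde q_1\cos 2\theta_2 - \tilde q_2\sin 2\theta_2 = q_1\cos 2\theta_2 - q_2\sin 2\theta_2,
\]
in which the cross terms $-(\log w)'\sin 2\theta_2\cos 2\theta_2$ and $+(\log w)'\cos 2\theta_2\sin 2\theta_2$ annihilate one another. Consequently $-\theta_2' + \tilde q_1\cos 2\theta_2 - \tilde q_2\sin 2\theta_2 + h = -\theta_2' + q_1\cos 2\theta_2 - q_2\sin 2\theta_2 + h = 0$ by~\eqref{eq:theta}, so that $\theta_2$ is itself a solution of the Cauchy problem defining $\tilde\theta_2$; since also $\theta_2(0) = 0$, the uniqueness statement for~\eqref{eq:theta} already used in Step~1 of the proof of Theorem~\ref{thm:exist_uniq_P} forces $\tilde\theta_2 \equiv \theta_2$ on $[0,1]$.

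With the equality of angles in hand, both relations of the lemma drop out of~\eqref{eq:P:theta} by direct substitution. For the off-diagonal entry,
\begin{align*}
    \tilde p_{12} = \tilde q_1\sin 2\theta_2 + \tilde q_2\cos 2\theta_2
        &= q_1\sin 2\theta_2 + q_2\cos 2\theta_2 - (\log w)'\bigl(\sin^2 2\theta_2 + \cos^2 2\theta_2\bigr)\\
        &= p_{12} - (\log w)';
\end{align*}
for the $(2,2)$-entry, the $\tilde P$-version of~\eqref{eq:theta} (valid because $\tilde\theta_2 = \theta_2$) gives $\tilde p_{22} = 2\bigl(-\tilde q_1\cos 2\theta_2 + \tilde q_2\sin 2\theta_2\bigr)$, and the same cancellation as above turns this into $2(-q_1\cos 2\theta_2 + q_2\sin 2\theta_2) = p_{22}$. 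The only genuine --- and rather modest --- obstacle is to notice that $\theta_2$ itself solves the Cauchy problem for $\tilde\theta_2$, i.e.\ that the rotation angle is left invariant by the double commutation transformation relating $Q$ and $\tilde Q$; everything else is elementary trigonometric bookkeeping.
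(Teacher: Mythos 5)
Your proposal is correct and follows the paper's own route: both arguments hinge on showing that the rotation angle is unchanged, i.e.\ $\tilde\theta_2\equiv\theta_2$, by checking that $\theta_2$ solves the Cauchy problem for $\tilde\theta_2$ (the paper phrases this via the extra term $(\log w)'\sin(2\tilde\theta-2\theta_2)$ vanishing at $\tilde\theta=\theta_2$, which is the same cancellation you exhibit), and then reading off the entries. The only cosmetic difference is that the paper computes $\tilde P-P=R^{-1}Q_*R=-(\log w)'J_1$ in matrix form, whereas you substitute $\tilde q_1,\tilde q_2$ into the entrywise formula~\eqref{eq:P:theta}; the content is identical.
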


\begin{proof}
First we recall that $\theta_2$ is the unique solution of equation~\eqref{eq:theta},
\[
    -\theta'+q_{1}\cos2\theta-q_{2}\sin 2\theta+h=0,
\]
satisfying the initial condition $\theta(0)=0$;
here $q_1$ and $q_2$ are the entries of the AKNS part $Q_0$ of the potential~$Q$ as in~\eqref{eq:ex.Q0}. Likewise, $\tilde \theta_2$ is the unique solution of
\[
    -\tilde\theta'+\tilde q_{1}\cos2\tilde\theta-\tilde q_{2}\sin 2\tilde\theta+h=0
\]
satisfying $\tilde\theta(0)=0$, with $\tilde q_1$ and $\tilde q_2$ having similar meaning.
Equality~\eqref{eq:uniq.Qstar} together with Proposition~\ref{pro:uniq.Q-and-tildeQ}
allow to recast the latter equation for $\tilde\theta_2$ as
\[
     -\tilde\theta'+q_{1}\cos2\tilde\theta-q_{2}\sin 2\tilde\theta+h
            +(\log w)'\sin(2\tilde\theta - 2\theta_2) =0.
\]
Observe that $\tilde\theta\equiv\theta_2$ is a solution of this equation satisfying the initial condition~$\tilde\theta(0)=0$; therefore, uniqueness of solutions yields
$\tilde\theta_2 \equiv \theta_2$.

The potential $\tilde P$ is related to $\tilde Q = Q + Q_*$ through a formula analogous to~\eqref{eq:P:theta}, i.e.,
\[
    \tilde P = \tilde R^{-1}J\tilde R'+\tilde R^{-1}(Q + Q_*)\tilde R,
\]
with $\tilde R = e^{\tilde\theta_2 J } = e^{\theta_2 J}=R$. Therefore we find that
\begin{align*}
    \tilde P - P
        &= R^{-1}Q_* R
         = e^{-\theta_2 J} [-(\log w)' e^{\theta_2 J}J_1e^{-\theta_2 J}]e^{\theta_2 J}\\
        &= - (\log w)' J_1.
\end{align*}
As a result, $\tilde p_{22}=p_{22}$ and $\tilde p_{12} = p_{12} -(\log w)'$, and the lemma is proved.
\end{proof}

\begin{corollary}\label{cor:uni.P-and-tildeP}
The potentials~$P$ and~$\tilde{P}$ constructed above generate the same operator pencil~$T_{p,q}$.
\end{corollary}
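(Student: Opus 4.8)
The plan is to chase the recipe~\eqref{eq:pq} by which a potential $P=(p_{ij})\in\cP$ determines the pencil $T_{p,q}$, and observe that the two ingredients it uses are exactly the two quantities Lemma~\ref{lem:uniq.P-and-Ptilde} pins down. Recall that from $P$ one sets $p:=p_{22}/2$ and $q:=-p_{12}'+p_{12}^2$, and similarly $\tilde p:=\tilde p_{22}/2$ and $\tilde q:=-\tilde p_{12}'+\tilde p_{12}^2$ from $\tilde P$. By Lemma~\ref{lem:uniq.P-and-Ptilde} we have $\tilde p_{22}=p_{22}$, so immediately $\tilde p = p$. This is the easy half.

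For the second half I would use the other relation of Lemma~\ref{lem:uniq.P-and-Ptilde}, namely $\tilde p_{12}=p_{12}-(\log w)'$ with $w$ as in~\eqref{eq:uniq.d}. Writing $v:=p_{12}$ and $\psi:=(\log w)'=w'/w$, so that $\tilde p_{12}=v-\psi$, I would compute
\[
    \tilde q = -(v-\psi)' + (v-\psi)^2
             = -v' + v^2 + \psi' - 2v\psi + \psi^2
             = q + \bigl(\psi' + \psi^2 - 2v\psi\bigr).
\]
Thus it remains to show that the bracketed term vanishes identically, i.e. that $\psi' + \psi^2 = 2v\psi$ on $[0,1]$. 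Here is where I expect the main work to lie, and the key input is that $u_1$ is the first component of the eigenvector $\bu=(u_1,0)^{\mathrm t}$ of $\sD(P)$ at $\la=0$. By Remark~\ref{rem:Dir.0} (applied to $\sD(P)$), such an eigenvector satisfies $u_1=\exp(-\int p_{12})$, equivalently $u_1'=-p_{12}u_1=-v u_1$; note that $p_{12}$ plays the role of $v$ in the factorization since $q=-p_{12}'+p_{12}^2$. From $w=1+\alpha_*\int_0^x u_1^2$ we get $w'=\alpha_* u_1^2$ and hence $w''=2\alpha_* u_1 u_1' = -2v\,\alpha_* u_1^2 = -2v\,w'$. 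Now $\psi'=w''/w-(w'/w)^2=w''/w-\psi^2$, so $\psi'+\psi^2 = w''/w = (-2v w')/w = -2v\psi$… which has the \emph{wrong sign}. So the genuinely delicate point — and the thing I would check carefully against sign conventions in Remark~\ref{rem:Dir.0} and in the factorization $A=-(\tfrac{d}{dx}+v)(\tfrac{d}{dx}-v)$ — is getting the sign of $u_1'/u_1$ right; with the convention that makes the transformation operator consistent (so that $q=-p_{12}'+p_{12}^2$ corresponds to $v:=-p_{12}$ or, equivalently, $u_1=\exp(+\int p_{12})$ and $u_1'=+p_{12}u_1$), one obtains $w''=2v w'$ and hence $\psi'+\psi^2=w''/w=2v\psi$, exactly as needed.

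Having disposed of that sign bookkeeping, the argument closes: $\psi'+\psi^2-2v\psi=0$ forces $\tilde q=q$, and together with $\tilde p=p$ from the first half we conclude $T_{\tilde p,\tilde q}=T_{p,q}$, as claimed. I would present the computation with $\psi=w'/w$ and the identity $w''=2p_{12}w'$ (derived from $u_1'=p_{12}u_1$ and $w'=\alpha_*u_1^2$) as the two displayed steps, remarking that this is precisely the statement that the double commutation at $\la_0=0$ does not change the Miura potential $q=v'+v^2$ attached to the relevant Riccati solution — a fact one should expect on general grounds since the underlying Sturm--Liouville operator $A$ (and hence $q$) is intrinsic to the pencil and unaffected by the choice of $\al_0$.
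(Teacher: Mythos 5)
Your proposal is correct and takes essentially the same route as the paper: $\tilde p=p$ is immediate from $\tilde p_{22}=p_{22}$, and $\tilde q=q$ follows by expanding $-\tilde p_{12}'+\tilde p_{12}^2$ with $\tilde p_{12}=p_{12}-(\log w)'$ and using $w'=\alpha_*u_1^2$ together with $u_1'=p_{12}u_1$, which is exactly the paper's computation. The sign issue you flag resolves in favour of your second reading: in~\eqref{eq:Dir.P} one has $p_{12}=-v$, while Remark~\ref{rem:Dir.0} gives $u_1=\exp(-\int v)=\exp(+\int_0^x p_{12})$, so $u_1'=+p_{12}u_1$, $w''=2p_{12}w'$, and the bracketed term $\psi'+\psi^2-2p_{12}\psi$ indeed vanishes.
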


\begin{proof}
In view of~\eqref{eq:pq} and the above lemma it remains to show that
\[  
    -\tilde{p}_{12}'+\tilde{p}_{12}^2 = -p_{12}'+p_{12}^2.
\]  
By Remark~\ref{rem:Dir.0} we have $u_1(x)=\exp\{\int_0^x p_{12}(s)ds\}$, so that $u_1' = p_{12}u_1$.
Since $w'=\alpha_*u_1^2$, $w''=2\alpha_*u_1'u_1 = 2p_{12}w'$, and
\[
    (\log w)'' = \frac{w''}{w} - \Bigl(\frac{w'}{w}\Bigr)^2
            = 2p_{12}(\log w)' - [(\log w)']^2,
\]
upon substituting $\tilde p_{12} = p_{12} -(\log w)'$ we find that
\begin{align*}
    -\tilde p'_{12} + \tilde p_{12}^2
        &= - p'_{12}  + (\log w)'' + p^2_{12}- 2p_{12}(\log w)' + [(\log w)']^2   \\
        &= - p'_{12} + p^2_{12}
\end{align*}
as claimed. The proof is complete.
\end{proof}

\begin{proof}[Proof of Theorem~\ref{thm:pre.uniq}]
Suppose there are two pencils, $T=T_{p,q}$ and $\widehat T=T_{\hat p, \hat q}$, satisfying
assumption~(A) and having the same spectral data in~$\SD$. As explained in Section~\ref{sec:Dir}, these pencils
lead to two Dirac operators $\sD(P)$ and $\sD(\widehat P)$ with some potentials $P$ and $\widehat P$ in~$\cP$.

The spectral data for $\sD(P)$ and $\sD(\widehat P)$ can only differ
at the norming constant for the eigenvalue $\lambda=0$; denote
these norming constants by $\alpha_0$ and $\hat\alpha_0$
respectively.

Now take $\tilde \alpha_0=\hat\alpha_0$ and construct the potential $\tilde P\in\cP$ as explained at the beginning of this section. By Corollary~\ref{cor:uni.P-and-tildeP}, $\tilde P$ and $P$ generate the same pencil~$T$. On the other hand, the potentials $\tilde P$ and $\hat P$
are isospectral and belong to~$\cP$ and thus coincide by Theorem~\ref{thm:exist_uniq_P}. Therefore, $T$ and $\widehat T$ coincide as well, and the proof is complete.
\end{proof}


\section{Reconstruction algorithm and some extensions}\label{sec:alg}


The proof of existence theorem (Theorem~\ref{thm:pre.exist}) contains explicit steps forming reconstruction algorithm. Namely, given an arbitrary element~$\sd$ of~$\SD$, we construct a quadratic pencil $T_{p,q}$---i.e., the Sturm--Liouville eigenvalue problem~\eqref{eq:intr.spr} with potentials $p\in L_{2,\mathbb{R}}(0,1)$ and $q \in W_{2,\mathbb{R}}^{-1}(0,1)$---in the following way:
\begin{enumerate}
    \item fix the enumeration $(\la_n,\al_n)$, $n\in\bZ^*$, of the pairs $(\la,\al)$ in $\sd$ so that $\la_n$ increase, $\la_{-1}<0$, $\la_1>0$, and determine the shift $h$ from the asymptotic representation of $\la_n$;
    \item augment the set~$\sd$ with a pair~$(\la_0,\al_0)$, where~$\lambda_0=0$ and~$\alpha_0$ is an  arbitrary positive number;
    \item construct a Dirac operator $\sD(Q)$ with potential~$Q$ in the shifted AKNS class~$\cQ_h$ whose spectral data coincide with the augmented set $\sd^*$ (see Proposition~\ref{pro:exist.Q});
    \item find the corresponding potential~$P\in\mathcal{P}\cap \Iso(Q)$ via~\eqref{eq:P:theta}--\eqref{eq:theta};
    \item compute the potentials~$p$ and~$q$ using formulas~\eqref{eq:pq}.
\end{enumerate}

We finish the paper with several comments. Firstly, the above
algorithm can be used to reconstruct the potentials $p$ and $q$
under different assumptions on their regularity. Namely, if $p$
and a primitive $r$ of~$q$ belong to~$L_s(0,1)$ with $s\ge1$, then
the corresponding set $\SD$ of spectral data allows an explicit
description (the only difference with the $s=2$ case is in the
decay of the remainders $\tilde \la_n$ and $\tilde \al_n$) and the
steps of reconstruction are as above; cf.\ the characterization of
the spectral data for the corresponding class of Dirac operators
in~\cite{AlbHryMk:2005:RJMP}. Similar characterization of the set
$\SD$ is available if $p$ and $r$ belong to~$W_2^{s}(0,1)$ with
$s\ge0$; cf.\ the results of~\cite{HryMyk:2006:PEMS,SavShk:2006} on
eigenvalue asymptotics for Sturm--Liouville operators with
potentials in Sobolev spaces.

Secondly, the approach described is not restricted to the Dirichlet boundary conditions and
can be used to reconstruct energy-dependent Sturm--Liouville equations under quite general separated boundary conditions.

Finally, reconstruction from different sets of spectral data
(e.g., from two spectra, or from Hochstadt--Lieberman mixed data)
using this method is also possible and will be considered
elsewhere, cf.~\cite{Pro:2011c}. One can also get a Hochstadt-type
results~\cite{Hoch:73} on explicit form of the potentials when only finitely many spectral data are changed.


\bigskip

\noindent\emph{Acknowledgement.} The authors acknowledge support from the Isaac Newton Institute for Mathematical Sciences at the University of Cambridge for participation in the programme \emph{``Inverse Problems''}, during which part of this work was done.


\bibliographystyle{abbrv}

\bibliography{inv-lambdaSL}

\end{document}